\newcommand{\Z}{\mbox{$\mathbb Z$}}
\newcommand{\Q}{\mbox{$\mathbb Q$}}
\newcommand{\lrta}{\longrightarrow}
\newcommand{\g}{\mathcal{G}}
\newcommand{\K}{K_\infty}
\newcommand{\QQ}{\mathbb{Q}}
\newcommand{\N}{\mathbb{N}}
\newcommand{\ZZ}{\mathbb{Z}}
\newcommand{\Hom}{\mathrm{Hom}}
\newcommand{\gal}{\mathrm{Gal}}
\newcommand{\cyc}{\mathrm{cyc}}
\newcommand{\lra}{\longrightarrow}
\newcommand{\plim}{\varprojlim}
\newcommand\cyr{%
\renewcommand\rmdefault{wncyr}
\renewcommand\sfdefault{wncyss}
\renewcommand\encodingdefault{OT2}
\normalfont\selectfont}
\DeclareTextFontCommand{\textcyr}{\cyr}
\newtheorem{theorem}{Theorem}[section]
\newtheorem{proposition}[theorem]{Proposition}
\newtheorem{lemma}[theorem]{Lemma}
\newtheorem{remark}[theorem]{Remark}
\newtheorem{corollary}{Corollary}
\newtheorem{definition}[theorem]{Definition}
\newtheorem{conjecture}[theorem]{Conjecture}
\newtheorem*{theorem*}{Theorem}
\newtheorem*{maintheorem}{Main Theorem}
\newcommand{\Keywords}[1]{\par\noindent
{\small{Keywords and phrases}: #1}}
\newcommand{\AMS}[1]{\par\noindent
{\small{AMS Subject Classification}: #1}}
\author{Somnath Jha, Sudhanshu Shekhar}
\address{Department of Mathematics and Statistics, IIT Kanpur, Kanpur 208016, India}
\email{jhasom@iitk.ac.in, sudhansh@iitk.ac.in }
\begin{document}

\title{Non-commutative twisted Euler characteristic}

\begin{abstract}
It is well known that given a finitely generated torsion module $M$ over the Iwasawa algebra $\Z_p[[\Gamma ]]$, where $\Gamma \cong \Z_p$, there exists a continuous $p$-adic character $\rho$ of $\Gamma$ such that, for the twist $M(\rho)$ of $M$, the $\Gamma_n : = \Gamma^{p^n}$ Euler characteristic i.e. $\chi(\Gamma_n,  M(\rho))$ is finite for every $n$. We prove a generalization of this result by considering modules over the Iwasawa algebra of a general $p$-adic Lie group $G$, instead of $\Gamma$.  We relate this twisted Euler characteristic to the evaluation of the {\it Akashi Series} at the twist and in turn use it to indicate some application to the Iwasawa theory of elliptic curves. This article is a natural generalization of the result established in \cite{joz}. 
\end{abstract}

\maketitle
\tableofcontents
\let\thefootnote\relax\footnotetext{

\AMS{11R23, 14H52}
\Keywords{Iwasawa theory,  Selmer groups, elliptic curves.}
}

\section{Introduction}\label{intro}
In this article, we study certain topics in non-commutative Iwasawa theory following the set up of \cite{CFKSV}.  Fix an odd integer prime $p$. Let $L$ be a finite extension of  $\QQ_p$ and let $O$ denote the ring of integers of $L$. Let $G$ be a compact $p$-adic  Lie group without an element of order $p$ and $H$ be a closed subgroup of $G$ such that $\Gamma := G/H \cong  \ZZ_p$. We fix a topological generator $\gamma$ of $\Gamma$ and a lift $\tilde{\gamma}$ of $\gamma$ in $G$.  For a general profinite group $\g$, let $\Lambda_{O}(\g)$ denote the Iwasawa algebra defined as 
\[  \Lambda_{O}(\g) := \plim _U O[\g/U] \]
where $U$ varies over open normal subgroups of $\g$ and inverse limit is taken with respect to natural projection maps.  Given any ring $R$ and a left $R$ module $M$, we denote by $M[p^r]$ the set of $p^r$-torsion elemets of $M$ and we define $M(p) := \underset{r \geq 1}{\cup}M[p^r]$. Let $\mathfrak M_H(G)$ denote the category of finitely generated $\Lambda_{O}(G)$-modules $M$ such that $M/M(p)$ is a finitely generated $\Lambda_{O}(H)$-module. For a $\Lambda_{O}(G)$-module $M$ and a continuous character $\rho : \Gamma \lrta \ZZ_p^\times $, we denote by $M(\rho)$ the $\Lambda_{O}(G)$-module $M\otimes_{\ZZ_p} \ZZ_p(\rho)$ with diagonal $G$-action. 
Recall for a compact $p$-adic Lie group  $G$  which has no element of order $p$,  the Iwasawa algebra $\Lambda_{O}(G)$ has finite global dimension (\cite{B}, \cite{la}).

\begin{definition}
Let $G$ be a compact $p$-adic Lie group without any element of order $p$. For a finitely generated $\Lambda_{O}(G)$-module $M$, we say the $G$-Euler characteristic of $M$ exists if the homology groups $H_i(G,M)$ are finite for every $i \geq 0$. When the $G$-Euler characteristic of $M$ exists, then it is defined as 
\[  \chi(G,M) :=  \prod_i (\#H_i(G,M))^{(-1)^i} \]
\end{definition}

Given a $\Lambda_{O}(G)$-module $M$, $\chi(G,M)$ is an  invariant attached to $M$ and for an appropriate motive $\mathcal M$ and  $G$, under suitable hypotheses, the $G$-Euler characteristic of  certain Selmer group  $S(\mathcal M)$ attached to $\mathcal M$ i.e. $\chi(G, S(\mathcal M))$ often encodes important arithmetical properties of $\mathcal M$. We discuss a concrete example. Let $E/\Q$ be an elliptic curve with  good, ordinary reduction at $p \geq 11$ and $E$ has no prime of additive reduction.
Let $\Q_\cyc$ denotes the cyclotomic $\Z_p$ extension  of $\Q$ and $S_p(E/\Q)$ denote the $p^\infty$-Selmer group  of $E$ over $\Q$ (defined in \S   \ref{apeq}). 
We assume $L_E(s)$, the Hasse-Weil (complex) $L$-function of $E$ over $\Q$  satisfies $L_E(1) \neq 0$. Then Birch and Swinnerton-Dyer conjecture describing the leading term of $L_E(s)$ at $1$ together with \cite[Theorem 4.1]{Gr} implies that 
\begin{equation}\label{sel} 
  \chi(\Gamma,S_p(E/\Q_{\cyc})^\vee) =_p  \frac{L_E(1)}{\Omega_E}  (\# \tilde{E}(\mathbb F_p)(p))^2, 
\end{equation}
Here, $=_p$ denotes the equality up to a $p$-adic unit and $\Omega_E$ denote the smallest positive real period of $E$.  Note that due to works of  Gross-Zagier, Kolyvagin and others, it is established that  $L_E(1) \neq 0$ implies $E(\Q)$ is finite. Then for such an  $E$ and $p $ as above, the leading term formula for $L_E(s)$ in BSD  is derived from the proof of Iwasawa main conjecture for $E$ at $p$ due to works of Kato, Skinner-Urban and others (see \cite[Theorem 3.35(a)]{su}) for details). As a consequence of this, the formula in \eqref{sel} is established using \cite[Conjecture 1.13 \& \S 4]{Gr}.  

On the other hand, for a general $p$-adic Lie extension $K_\infty$ of a number field $K$ with Galois group $G$ as above and for an elliptic curve $E/\Q$, the existence of $p$-adic $L$-function of $E$ over $K_\infty$ is not known and even the algebraicity  of the $L$-values has not been established in general.  However, a  formulation of Iwasawa main conjecture is given in \cite{CFKSV} for  dual Selmer groups  in $\mathfrak  M_H(G)$ and in this context twisted Euler characteristic of modules in $\mathfrak  M_H(G)$ via Akashi series has been extensively studied in \cite{CFKSV}. In particular, for a general $\rho: G \lra \mathrm{GL}_n(\Z_p)$, the conjectural relation between $\chi(G, S_p(E/K_\infty)^\vee(\rho))$ and special values of twisted $L$-function is discussed in detail  (cf. \cite[Theorem 3.6]{CFKSV}). The main result of this article is the following theorem.

\begin{maintheorem}\label{main}
Let $M$ be a $\Lambda_{O}(G)$-module which is in $\mathfrak M_H(G)$ i.e. $M/M(p)$ is finitely generated over $\Lambda_{O}(H)$. Then there exists a continuous character $\rho : \Gamma \rightarrow \ZZ_p^\times $  such that $\chi(U,M(\rho))$ exists for every open normal subgroup $U$ of $G$.   \qed 
\end{maintheorem}
\begin{remark}
Actually, we will establish a slightly stronger result; we will prove there is a countable subset $S_M$ of the set $S : = \{ \text{all continuous characters from  }\Gamma \lra \ZZ_p^\times\}$ such that if we choose and fix any $\rho \in S \setminus S_M$, then $\chi(U,M(\rho))$ exists for every open normal subgroup $U$ of $G$.
\end{remark}

Our Main Theorem  is a generalization and uses the proof of the following  result proved in \cite{joz}, coauthored by the first named author. 
\begin{theorem}[\cite{joz}]\label{known}
Let $M$ be a $\Lambda_{O}(G)$-module which is finitely generated over $\Lambda_{O}(H)$. Then there exists a continuous character $\rho : \Gamma \lrta \ZZ_p^\times $, such that    $  M(\rho)_U:= H_0(U, M)$ is finite for every open normal subgroup $U$ of $G$. \qed

\end{theorem}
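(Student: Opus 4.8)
The plan is to treat each open normal subgroup $U\trianglelefteq G$ separately: first reduce the finiteness of $H_{0}(U,M(\rho))$ to the vanishing of the coinvariants of a procyclic group acting on a finite-dimensional $L$-vector space, and then choose the single unit $\rho(\gamma)$ so as to avoid the countably many values that spoil this vanishing for some $U$.

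First I would pass to coefficients in $L$. Since $M(\rho)$ is finitely generated over $\Lambda_{O}(G)$ and $G/U$ is finite, $H_{0}(U,M(\rho))$ is a finitely generated module over $\Lambda_{O}(G/U)=O[G/U]$, which is free of finite rank over $O$; hence $H_{0}(U,M(\rho))$ is finite if and only if it is $O$-torsion, i.e. if and only if $H_{0}\bigl(U,M(\rho)\bigr)\otimes_{O}L=H_{0}\bigl(U,N(\rho)\bigr)=0$, where $N:=M\otimes_{O}L$ and $N(\rho):=N\otimes_{\ZZ_p}\ZZ_p(\rho)\cong M(\rho)\otimes_{O}L$ (using that $H_{0}(U,-)$ commutes with the flat base change $-\otimes_{O}L$). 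Note $N$ is finitely generated over $\Lambda_{O}(H)\otimes_{O}L$. Now fix $U$. Because $H\trianglelefteq G$ we have $V:=U\cap H\trianglelefteq U$, and $U/V\cong UH/H$ is an open subgroup of $\Gamma\cong\ZZ_p$, hence equals $\Gamma_{n}$ for a unique $n=n(U)\ge 0$; let $\gamma_{n}:=\gamma^{p^{n}}$ be the corresponding topological generator. Since $V$ is open in $H$, $\Lambda_{O}(H)$ is a finitely generated $\Lambda_{O}(V)$-module, so $N$ is finitely generated over $\Lambda_{O}(V)\otimes_{O}L$, and therefore $N_{V}:=H_{0}(V,N)$ is a finite-dimensional $L$-vector space. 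Using $X_{U}=(X_{V})_{U/V}$ for $H_{0}$ and the fact that $V\subseteq H=\ker(G\to\Gamma)$ acts trivially on $\ZZ_p(\rho)$, one obtains
\[
H_{0}\bigl(U,N(\rho)\bigr)\;=\;\bigl(N_{V}(\rho)\bigr)_{\Gamma_{n}}\;=\;\coker\bigl(\rho(\gamma)^{p^{n}}\,T_U-\mathrm{id}\bigr),
\]
where $T_U\in\GL_{L}(N_{V})$ is the automorphism by which $\gamma_{n}$ acts on $N_{V}$. Hence $H_{0}(U,M(\rho))$ is finite if and only if $\rho(\gamma)^{p^{n(U)}}$ is not the inverse of an eigenvalue of $T_U$ (over an algebraic closure $\overline{L}$ of $L$).

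It then remains to choose $\rho(\gamma)$. A continuous character $\rho:\Gamma\to\ZZ_p^{\times}$ is exactly a choice of $\rho(\gamma)\in 1+p\ZZ_p$ (the image lies in the pro-$p$ part since $\Gamma$ is pro-$p$ and $p$ is odd). For each open normal $U$ let $\Sigma_{U}\subset\overline{L}^{\times}$ be the finite eigenvalue set of $T_U$ and set $B_{U}:=\{x\in 1+p\ZZ_p:\ x^{p^{n(U)}}\in\Sigma_{U}^{-1}\}$; this is finite because $x\mapsto x^{p^{n(U)}}$ is injective on $1+p\ZZ_p$ (as $p$ is odd), so $\#B_{U}\le\#\Sigma_{U}$. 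By the previous paragraph, $\rho(\gamma)\notin B_{U}$ forces $H_{0}(U,M(\rho))$ to be finite. Finally, a compact $p$-adic Lie group is topologically finitely generated, hence has only countably many open normal subgroups, so $\bigcup_{U}B_{U}$ is a countable subset of the uncountable group $1+p\ZZ_p\cong\ZZ_p$. Choosing $\rho(\gamma)\in(1+p\ZZ_p)\setminus\bigcup_{U}B_{U}$ produces a character $\rho$ that works for every open normal $U\trianglelefteq G$; the set $S_{M}:=\{\rho:\rho(\gamma)\in\bigcup_{U}B_{U}\}$ is the countable exceptional set of the Remark.

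There is no essential difficulty here: the one point to get right is the reduction step — compatibility of coinvariants with the twist by $\rho$ and with restriction to $U$, together with finite-dimensionality of $N_{V}$ over $L$ — which rests on $V\subseteq H$, $V$ open in $H$, right-exactness of $H_{0}$, and the standard fact that $G$ has only countably many open subgroups.
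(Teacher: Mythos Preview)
Your argument is correct. The reduction to a finite-dimensional eigenvalue problem via $V=U\cap H$ is clean: the key facts you use --- that $M_V$ is finitely generated over $O$ (because $V$ is open in $H$), that the augmentation ideal of $\Lambda_O(\Gamma_n)$ is principal so that $\Gamma_n$-coinvariants are a single cokernel, and that $G$ has only countably many open normal subgroups --- are all in order, and the tensor-with-$L$ step is justified by flatness.

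Note, however, that the present paper does \emph{not} prove Theorem~\ref{known}; it is quoted from \cite{joz} and marked with a \qed. So there is no in-paper proof to compare against. From what the paper reveals about the methods of \cite{joz} (the ``Key Lemma'' and \cite[Lemma~1]{joz}, reproduced here as Lemma~\ref{iso}), the original argument proceeds differently: it first passes to an open subgroup $G^{0}\supset H$ over which $M$ admits a resolution by modules that are \emph{free} over $\Lambda_O(H)$, and then uses the explicit presentation $M\cong\Lambda_O(G^0)^d/\Lambda_O(G^0)^d(\tilde{\gamma}I_d-A)$ to analyse coinvariants in terms of the matrix $A$. Your route bypasses both the Key Lemma and the free presentation entirely, reducing directly to linear algebra over $L$ for each $U$; this is more elementary and yields the countable exceptional set $S_M$ immediately. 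What the matrix approach of \cite{joz} buys in return is the structural input (projective dimension~$1$, the resolution in Proposition~\ref{dim}) that this paper then exploits for the Main Theorem on full Euler characteristics.
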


\begin{remark} (1) For $G = \Gamma$, $H =1$, Theorem \ref{known} and Main Theorem are equivalent. Indeed it is the well known ``twisting lemma" which can be found in works of \cite{gr1}, \cite{pr}.

(2) More generally,   if $G$ is a `finite by nilpotent' group, then Theorem \ref{known} and Main Theorem  are equivalent (cf. \cite{wa}). Examples of `finite by nilpotent' group includes  $G \cong \ZZ_p^{d}$, $d \in \N$.

(3) However for a  general compact $p$-adic Lie group $G$ without any element of order $p$ and for a finitely generated $\Lambda_{O}(G)$ module $M$, $M_G$ is finite does not necessarily  imply  $\chi(G, M)$ exists (cf. \cite{bh}). Thus indeed Main Theorem  is more general than Theorem \ref{known}.

(4) The assumption $G$ has no element of order $p$ is assumed in Main Theorem but it is not needed in  Theorem \ref{known} of \cite{joz}. However in our case this assumption is clearly necessary; otherwise $\Lambda_{O}(G)$ may not have finite global dimension and hence the Euler characteristic may not be defined.

(5) It is necessary to assume $M \in \mathfrak M_H(G)$ in Main Theorem. See \cite[example, page 194]{CFKSV}, where for certain $M $ and $G$ with $M$ not in $\mathfrak M_H(G)$, even $\chi(G, M(\rho))$ does not exist for any $\rho$.

\end{remark} 

In  Section \ref{2section}, we prove the Main Theorem. In subsection \ref{appli1}, we discuss the relation between the Main Theorem and Akashi series and apply it to deduce Corollary \ref{eva1}. We then apply Main Theorem and Corollary \ref{eva1} to $p^\infty$-Selmer groups of elliptic curves  to deduce Corollary \ref{mainc} in subsection \ref{apeq}. 

\section {Proof of  Main Theorem}\label{2section}
 We start with the following remark
\begin{remark}
 By our assumption on $G$, $\Lambda_{O}(G)$ is (left and right) noetherian. it is then well known that the $U$-Euler characteristic of a finitely generated, $p$-primary  torsion $\Lambda_{O}(G)$  module always exists \cite[Proposition 1.6]{sh}. Thus to establish the Main Theorem, without any loss of generality, we may assume that $M$ is a finitely generated $\Lambda_{O}(H)$-module.

\end{remark}

We next prove the following lemma and use it to make Remark \ref{rednp}.
\begin{lemma}\label{prop}
Let $G$ be a compact $p$-adic Lie group without any element of order $p$ and let $W$ be any  normal subgroup of $G$. Let $M$ be a finitely generated $\Lambda_{O}(G)$-module. If $W$-Euler characteristic of $M$ exists, then $G$-Euler characteristic also exists. 
\end{lemma}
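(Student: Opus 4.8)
The natural approach is via the Hochschild--Serre spectral sequence for the homology of the closed normal subgroup $W$ of $G$ with coefficients in the compact module $M$,
\[ E^2_{i,j} = H_i\big(G/W,\, H_j(W,M)\big) \Longrightarrow H_{i+j}(G,M), \]
a first-quadrant spectral sequence of $\Lambda_{O}(G/W)$-modules. Before using it I would record two standing facts. Since $W$ is a closed subgroup of the $p$-adic Lie group $G$, it is itself a compact $p$-adic Lie group with no element of order $p$, and $G/W$ is a compact $p$-adic Lie group as well (although $G/W$ may have elements of order $p$); in particular $\Lambda_{O}(W)$, $\Lambda_{O}(G)$ and $\Lambda_{O}(G/W)$ are all noetherian, and $G$ has finite cohomological dimension $d:=\mathrm{cd}_p(G)=\dim G$, so that $H_n(G,M)=0$ for every $n>d$.

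Now the hypothesis that the $W$-Euler characteristic of $M$ exists says precisely that each $H_j(W,M)$ is a finite group. Since $W$ is normal, $H_j(W,M)$ carries a natural $G/W$-action, so it is a finite $\Lambda_{O}(G/W)$-module. I then claim that for \emph{any} finite $\Lambda_{O}(G/W)$-module $N$ the homology group $H_i(G/W,N)$ is finite for every $i\geq 0$: choosing a resolution of $O$ by finitely generated free $\Lambda_{O}(G/W)$-modules (possible since $\Lambda_{O}(G/W)$ is noetherian) and applying $-\otimes_{\Lambda_{O}(G/W)}N$ yields a complex whose terms are finite groups, and $H_i(G/W,N)$ is a subquotient of one such term. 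Hence every $E^2_{i,j}$ is finite. Note that we do not, and in general cannot, assert $H_i(G/W,N)=0$ for large $i$, since $G/W$ may contain elements of order $p$; this is the one delicate point, and it is exactly why finite cohomological dimension must be invoked for $G$ itself rather than for $G/W$.

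To conclude, fix $n\geq 0$. If $n>d$ then $H_n(G,M)=0$. If $n\leq d$, then $H_n(G,M)$ carries a finite filtration whose graded pieces are the terms $E^{\infty}_{i,j}$ with $i+j=n$; each of these is a subquotient of the finite group $E^2_{i,j}$, hence finite, and there are only finitely many pairs $(i,j)$ with $i+j=n$. Therefore $H_n(G,M)$ is finite for every $n$ and vanishes for $n>d$, which is exactly the assertion that the $G$-Euler characteristic of $M$ exists. The main obstacle, as indicated, is the bookkeeping forced by the possibility of $p$-torsion in $G/W$: one must extract finiteness of the $E^2$-page from noetherianity of $\Lambda_{O}(G/W)$ rather than from any cohomological-dimension bound on $G/W$, and use $\mathrm{cd}_p(G)<\infty$ solely to truncate the abutment in total degree. (If one also wanted a formula, the same spectral sequence gives $\chi(G,M)=\prod_j \chi\big(G/W,H_j(W,M)\big)^{(-1)^j}$ whenever the right-hand factors all exist, but this is not needed for the stated lemma.)
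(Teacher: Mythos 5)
Your proof is correct and follows essentially the same route as the paper: Hochschild--Serre applied to $W \lhd G$, finiteness of $H_i(G/W,N)$ for finite $N$, and the finite global dimension of $\Lambda_O(G)$ to truncate the abutment. The paper simply cites Lazard for the finiteness on the $E^2$-page where you spell out the free-resolution argument, and your remark that one must use $\mathrm{cd}_p(G)<\infty$ rather than any bound on $G/W$ (which may have $p$-torsion) is a correct and worthwhile clarification of the same idea.
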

\begin{proof}
Since $W$-Euler characteristic of $M$ exists, by definition we have the groups $H_j(W,M)$ are finite for every integer $j\geq 0$. Thus $H_i \big(G/W,H_j(W,M)\big)$ is finite for every integer $i,j \geq 0$ (\cite{la}). Now it follows from the Hochschild-Serre spectral sequence that $H_r(G,M)$ is finite for every integer $r\geq 0$. Since $G$ is a compact $p$-adic  Lie group without any element of order $p$, the ring  $\Lambda_{O}(G)$ has finite global dimension. Consequently there exists an integer $m\geq 0$ such that $H_r(G,M)=0$ for all $r\geq m$. This shows that the $G$-Euler characteristic exists for $M$. 
\end{proof}

\begin{remark}\label{rednp}
By a result of Lazard \cite[Chapter 3, \S 3.1]{la}, any compact $p$-adic Lie group $G$ has an open normal uniform subgroup $W$. In particular, $W$ is a pro-$p$, $p$-adic Lie group without any element of order $p$. We choose and fix such a $W$. Now let $M$ be a  finitely generated $\Lambda_{O}(G)$ module. If  for every open normal subgroup $V$ of  $W$,  $V$-Euler characteristic of $M$ exists,  then it follows from Lemma \ref{prop} that for any open normal subgroup $U$ of $G$,  $U$-Euler characteristic of $M$ also exists. Hence from  this discussion, we further reduce Main Theorem to the special case where $G$ is a compact, pro-$p$  $p$-adic Lie  group without any element of order $p$. 

\end{remark}

\begin{lemma}\label{res}
Let $G$ be a pro-$p$, $p$-adic Lie group without any element of order $p$  and $M$ be a finitely generated $\Lambda_{O}(G)$-module which is also a finitely generated $\Lambda_{O}(H)$-module. Then there exist an open subgroup $G^{00}$ of $G$ with $ H \subset G^{00}$ and a resolution
\[  0 \lrta N_k \lrta N_{k-1} \lrta \cdots \lrta N_1 \lrta M \lrta 0 \]
of $M$ by finitely generated $\Lambda_{O}(G^{00})$-modules $N_i$, $i=1, \cdots,  k$, such that each $N_i$ is a free $\Lambda_{O}(H)$-modules.
\end{lemma}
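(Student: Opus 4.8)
The plan is to produce the resolution by starting from a free $\Lambda_O(G)$-resolution of $M$ and then descending to a suitable open subgroup $G^{00}$ where freeness over $\Lambda_O(H)$ can be arranged. First I would recall that, since $G$ is a compact $p$-adic Lie group without an element of order $p$, the ring $\Lambda_O(G)$ has finite global dimension, say $k$, so there is a finite resolution
\[ 0 \lrta P_k \lrta P_{k-1} \lrta \cdots \lrta P_1 \lrta M \lrta 0 \]
with each $P_i$ a finitely generated free $\Lambda_O(G)$-module (the kernel of a map between frees in the required degree is automatically projective, hence stably free, and one can absorb the stabilizing summand; over these local-type Iwasawa algebras projective finitely generated modules are in fact free, so this step is clean). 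Restricting scalars, each $P_i$ is then a finitely generated free $\Lambda_O(H)$-module, but $M$ itself need not be free over $\Lambda_O(H)$ — only finitely generated, by hypothesis — so this resolution is not yet of the desired shape.

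The key idea is then to shrink $H$'s complement: I would choose an open normal (or just open) subgroup $G^{00}$ of $G$ containing $H$, such that the finitely generated $\Lambda_O(H)$-module $M$ becomes free after we pass to $\Lambda_O(G^{00})$-modules in the resolution — more precisely, I want the modules $N_i$ appearing to be $\Lambda_O(H)$-free. The mechanism is: a finitely generated $\Lambda_O(H)$-module that sits as a submodule of a free module (here $N_i = \ker(N_{i-1} \to N_{i-2})$, a submodule of the free module $N_{i-1}$ over the relevant ring) over these Iwasawa algebras is free provided we are in high enough homological degree, because $\Lambda_O(H)$ also has finite global dimension; the issue is only at the bottom, near $M$ itself. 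To handle $M$, one exploits that $\Lambda_O(G)$ is free of finite rank over $\Lambda_O(G^{00})$ with basis given by coset representatives of $G^{00}$ in $G$, and dually $\Lambda_O(G^{00})$ embeds with a complemented image; concretely, induction/coinduction from $G^{00}$ to $G$ and the corresponding adjunctions let one replace a $\Lambda_O(G)$-free resolution by a $\Lambda_O(G^{00})$-free one, and then the extra freedom in choosing $G^{00}$ small makes $M$, as a $\Lambda_O(H)$-module, decompose favorably. The cleanest route is probably: take the minimal free $\Lambda_O(G)$-resolution, restrict to $\Lambda_O(H)$, and note that all syzygies $\Omega^i M$ for $i \geq 1$ are $\Lambda_O(H)$-free once $G^{00}$ is chosen so that $\mathrm{pd}_{\Lambda_O(H)} M$ drops appropriately — then reindex so that $N_1 = M$ sits at the top and the genuinely free modules $N_2, \dots, N_k$ realize it, using that over the (local, Auslander-regular) ring $\Lambda_O(H)$ a finitely generated module of finite projective dimension whose higher syzygies are free is resolved by frees.

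The main obstacle I anticipate is exactly the interface between the two algebras: a resolution that is free over $\Lambda_O(G)$ (hence also over $\Lambda_O(H)$ by restriction, since $\Lambda_O(G)$ is $\Lambda_O(H)$-free) already almost works, so the real content is arranging that the \emph{last} module $N_k$ — the kernel of $N_{k-1}\to N_{k-2}$ — is $\Lambda_O(H)$-free rather than merely projective, and simultaneously that passing from $G$ to an open $G^{00}\supseteq H$ does not destroy finite generation over $\Lambda_O(H)$. The first point is where I expect to need that $\Lambda_O(H)$ is a local ring (it is, being a completed group algebra of a pro-$p$ group over a complete local $O$) so that finitely generated projectives are free, killing the distinction; the second point is automatic since $[G:G^{00}]<\infty$ forces $\Lambda_O(G)$ to be finitely generated as a $\Lambda_O(G^{00})$-module, so restriction of a finitely generated module stays finitely generated. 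I would therefore expect the proof to go: (i) invoke finite global dimension of $\Lambda_O(G)$ to get a finite free resolution; (ii) restrict scalars along $\Lambda_O(H)\hookrightarrow\Lambda_O(G^{00})\hookrightarrow\Lambda_O(G)$, choosing $G^{00}$ open with $H\subseteq G^{00}$ as needed to control the homological behavior of $M$ over $\Lambda_O(H)$; (iii) use locality of $\Lambda_O(H)$ to upgrade "projective" to "free" for each $N_i$; (iv) truncate/reindex to obtain the stated length-$k$ resolution. The one genuinely delicate verification will be that every $N_i$, and not just the higher syzygies, can be taken $\Lambda_O(H)$-free — this is presumably where the precise choice of $G^{00}$ (and possibly the structure theory of modules over Iwasawa algebras, e.g. that $M$ being $\Lambda_O(H)$-finitely generated and $G$ pro-$p$ gives control on $\mathrm{Tor}^{\Lambda_O(H)}_*(O,M)$) enters.
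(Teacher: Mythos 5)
Your plan fails at the very first step. You propose to take a finite free resolution of $M$ by finitely generated free $\Lambda_O(G)$-modules $P_i$ and then "restrict scalars" to $\Lambda_O(H)$, observing that "$\Lambda_O(G)$ is $\Lambda_O(H)$-free" so each $P_i$ is free over $\Lambda_O(H)$. But $H$ is \emph{not} open in $G$: the quotient $\Gamma = G/H \cong \ZZ_p$ is infinite, so $\Lambda_O(G)$, while indeed free as a $\Lambda_O(H)$-module, has \emph{infinite} rank. Consequently a finitely generated free $\Lambda_O(G)$-module is not finitely generated over $\Lambda_O(H)$ at all, and the lemma requires the $N_i$ to be finitely generated free $\Lambda_O(H)$-modules. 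Passing from $G$ to an open $G^{00}$ with $H\subset G^{00}$ does nothing to fix this, since $[G^{00}:H]$ is still infinite. So the "restrict a $\Lambda_O(G)$-free resolution" route cannot produce the objects the lemma demands, and the secondary worry you flag (upgrading projective to free at the tail) is not the real difficulty.

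The paper's actual mechanism is entirely different and is the genuine content you are missing: it invokes the Key Lemma of \cite{joz}, which, given a finitely generated $\Lambda_O(G)$-module that is finitely generated over $\Lambda_O(H)$, \emph{constructs} an open subgroup $G^0 \supseteq H$, a finitely generated $\Lambda_O(G^0)$-module $N_1$ that is finitely generated \emph{free} over $\Lambda_O(H)$, and a $\Lambda_O(G^0)$-linear surjection $N_1 \twoheadrightarrow M$. One then looks at the kernel $K_1$, uses the standard dimension-shifting inequality for projective dimension in a short exact sequence to see that $\mathrm{pd}_{\Lambda_O(H)}(K_1) = \mathrm{pd}_{\Lambda_O(H)}(M) - 1$, and iterates (shrinking the open subgroup at each step and using finiteness of the global dimension of $\Lambda_O(H)$, plus locality of $\Lambda_O(H)$ to convert projective to free, to ensure termination). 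The chain of opens $G \supseteq G^0_1 \supseteq G^0_2 \supseteq \cdots$ stabilizes at $G^{00}$, over which the whole resolution is defined. Without some version of that Key Lemma — a way to produce, over an open subgroup containing $H$, a finitely generated module that is $\Lambda_O(H)$-free and surjects onto $M$ — there is no way to start the resolution, and nothing in your sketch supplies it.
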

\begin{proof}We make two observations which are used in the proof of this lemma. 

First, as  $H$  does not have any element of order $p$ either, the global dimension of $\Lambda_{O}(H)$ is also finite. Moreover, $H$ is also pro-$p$ group, $\Lambda_{O}(H)$ is a local ring and hence any finitely generated projective  $\Lambda_{O}(H)$ module is a finitely generated free $\Lambda_{O}(H)$ module. 

Second, we recall a basic fact from  homological algebra (see for example \cite[(5.6), Page 37]{ov}). Given an exact sequence of $\Lambda_{O}(H)$ modules, $$0 \lra K \lra M \lra L\lra 0,$$ let $k,m,l $ respective be the $\Lambda_{O}(H)$ projective dimensions of $K, M,L$.  we have $ m \leq \text{max} \{k, l \}$ and if this is an strict inequality, then  $k = l-1$.

Note if  the given  $M$ in the lemma is a projective $\Lambda_{O}(H)$-module then $N_1 =M$ works. Let $t  > 0$ be the $\Lambda_{O}(H)$-projective dimension of $M$ (which is finite by first observation).   By \cite[Key Lemma]{joz},
there exists an open subgroup $G^0_1$ of $G$, a finitely generated $\Lambda_{O}(G^0)$-module $N_1$ which is a free $\Lambda_{O}(H)$-module and an exact sequence of $\Lambda_{O}(G^0)$-modules 
\[  0 \lrta K_1 \lrta N_1 \lrta M \lrta 0. \]
If $M$ is not a projective  $\Lambda_{O}(H)$ module, then the $\Lambda_{O}(H)$-projective dimension of $K_1$ is equal to $d-1$, by our second observation. If $K_1$ is projective over $\Lambda_{O}(H)$, then we are done. Otherwise, again we apply \cite[Key Lemma]{joz} to get an open subgroup $G^0_2$ of $G^0_1$ containing $H$ and an exact sequence 
\[   0 \lrta K_2 \lrta N_2 \lrta K_1  \lrta 0 \] 
of $\Lambda_{O}(G^0_2)$-modules  such that $N_2$ is a projective $\Lambda_{O}(H)$-module. The $\Lambda_{O}(H)$-projective dimension of $K_2$ is equal to $d-2$. By repeating this process, at most $d$ times, we get an open subgroup $G_0^k$ of $G$ containing $H$ and an exact sequence
\[  0 \lrta N_k \lrta N_{k-1} \lrta \cdots \lrta N_1 \lrta M \lrta 0 \]
of  $\Lambda_{O}(G_0^k)$-module such that each $N_i$ is a finitely generated free  $\Lambda_{O}(H)$-module. This proves the lemma.

Note that in the special case of the false Tate curve extension (cf. \cite{za} for the definition of false Tate curve extension) where $G \cong \Z_p \rtimes \Z_p$, Lemma \ref{res} was proved in \cite[Lemma 4.3]{za}.
\end{proof}

Let $M$ be a  $\Lambda_{O}(G)$-module which is also a finitely generated free $\Lambda_{O}(H)$-module. Fix a $\Lambda_{O}(H)$ isomorphism $\phi : M \cong \Lambda_{O}(H)^d$. Let $\{e_1,e_2,\cdots,e_d\}$ be the standard basis of $ \Lambda_{O}(H)^d$, namely, $e_i$ denote the row vector with $1$ at $i$th entry and $0$ every where else. Since $M$ is a $G$-module, via the isomorphism $\phi$, $G$ acts on $ \Lambda_{O}(H)^d$. Let $A$ be the matrix of multiplication of $\tilde{\gamma}$ 
defined as $\tilde{\gamma}*e_i :=\Sigma_j a_{ij}e_j $ for $a_{ij}\in \Lambda_{O}(H)$. Thus we have, $\tilde{\gamma}*e_i = e_iA$. Let $x = \Sigma_k a_k e_k\in \Lambda_{O}(H)^d$. We get 

\begin{equation}\label{for}
 \tilde{\gamma}*x = \Sigma_k (\tilde{\gamma}a_k\tilde{\gamma}^{-1}) \tilde{\gamma} *e_k  = (\Sigma_k \tilde{\gamma}a_k\tilde{\gamma}^{-1} e_k)A.
 \end{equation}
Let us  denote the left $\Lambda_{O}(G)$-module $\frac{\Lambda_{O}(G)^d}{\Lambda_{O}(G)^d(\tilde{\gamma}I_d -A)}$  by $\tilde{M}$. Here $I_d$ denote the $d\times d$ identity matrix. 
Consider the natural map 
\[ \psi : \Lambda_{O}(H)^d \lrta \frac{\Lambda_{O}(G)^d}{\Lambda_{O}(G)^d(\tilde{\gamma}I_d -A)} \]
induced by the natural inclusion  map $\Lambda_{O}(H)\lrta \Lambda_{O}(G)$.  It can be easily shown that $\psi$ is an isomorphism of left $\Lambda_{O}(H)$-modules. We shall next show that it is an isomorphism of $\Lambda_{O}(G)$-modules. Recall $\Lambda_{O}(H)^d$ is a $\Lambda_{O}(G)$-module via the isomorphism $\phi$. To show $\psi$ is a $\Lambda(G)$-linear it is enough to show $\psi(\tilde{\gamma} *x )=\tilde{\gamma}\psi(x)$ for all $x$. Let $x = \Sigma_k a_k e_k\in \Lambda_{O}(H)^d$. From \eqref{for}, we have
\[  \psi(\tilde{\gamma}*x) = \psi( (\Sigma_k \tilde{\gamma}a_k\tilde{\gamma}^{-1} e_k)A) = (\Sigma_k \tilde{\gamma}a_k\tilde{\gamma}^{-1} e_k)A \in \tilde{M}. \]
Now the right multiplication by $A$ in $\tilde{M}$ is same as the right multiplication by $\tilde\gamma I_d$ in $\tilde{M}$. Thus we have $(\Sigma_k \tilde{\gamma}a_k\tilde{\gamma}^{-1} e_k)A=(\Sigma_k \tilde{\gamma}a_k\tilde{\gamma}^{-1} e_k)\tilde{\gamma}I_d \in \tilde{M}$.  Using the scalar multiplication by $\tilde{\gamma}$ in $\Lambda_O(G)^{d}$,  $\tilde\gamma e_i = e_i \tilde\gamma I_d$, we deduce that $\psi(\tilde{\gamma}*x)=(\Sigma_k \tilde{\gamma}a_k\tilde{\gamma}^{-1} e_k)\tilde{\gamma}I_d = \Sigma_k\tilde{\gamma} a_k  e_k =  \tilde{\gamma} (\Sigma_k a_k e_k)  \in \tilde{M}$. This proves that $\psi(\tilde{\gamma} *x )=\tilde{\gamma}\psi(x)$ in $\tilde{M}$.  Thus we have proved Lemma \ref{iso}  generalizing \cite[Lemma 1]{joz}:
\begin{lemma}\label{iso}
Let $M$ be a  $\Lambda_{O}(G)$-module which is  a finitely generated free $\Lambda_{O}(H)$-module. Then there is a $\Lambda_O(G)$ module isomorphism
 $$M \cong  \frac{\Lambda_{O}(G)^d}{\Lambda_{O}(G)^d(\tilde{\gamma}I_d -A)}. \quad \quad ~  \qed$$
\end{lemma}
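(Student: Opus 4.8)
The plan is to exhibit the natural map
$\psi\colon \Lambda_{O}(H)^d \lrta \tilde{M}:=\Lambda_{O}(G)^d/\Lambda_{O}(G)^d(\tilde{\gamma}I_d-A)$
induced by the ring inclusion $\Lambda_{O}(H)\lrta\Lambda_{O}(G)$, and to prove two things: (i) $\psi$ is an isomorphism of left $\Lambda_{O}(H)$-modules; and (ii) once $\Lambda_{O}(H)^d$ is equipped with the $\Lambda_{O}(G)$-module structure transported from $M$ through the chosen isomorphism $\phi\colon M\cong\Lambda_{O}(H)^d$, the map $\psi$ is $\Lambda_{O}(G)$-linear. Granting both, $\psi\circ\phi\colon M\to\tilde{M}$ is the asserted $\Lambda_{O}(G)$-module isomorphism.

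For (i) I would use that, after fixing the lift $\tilde{\gamma}$, the algebra $\Lambda_{O}(G)$ is a skew power series ring over $\Lambda_{O}(H)$ in the variable $\tilde{\gamma}-1$, with multiplication governed by the continuous automorphism $a\mapsto\tilde{\gamma}a\tilde{\gamma}^{-1}$ of $\Lambda_{O}(H)$; in particular $\Lambda_{O}(G)^d$ is topologically free as a left $\Lambda_{O}(H)$-module on the monomials $(\tilde{\gamma}-1)^ne_i$. The submodule $\Lambda_{O}(G)^d(\tilde{\gamma}I_d-A)$ is generated on the right by $\tilde{\gamma}I_d-A=(\tilde{\gamma}-1)I_d+(I_d-A)$, a degree-one matrix ``polynomial'' in $\tilde{\gamma}-1$ with leading coefficient $I_d$ and constant term $I_d-A$ over $\Lambda_{O}(H)$. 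The key observation is that continuity of the $G$-action on $M$ forces the reduction of $A$ modulo the maximal ideal $\mathfrak m_{H}$ of $\Lambda_{O}(H)$ to be unipotent (it represents an element of $p$-power order of $\GL_d$ of a finite field), so $I_d-A$ is topologically nilpotent and $\tilde{\gamma}I_d-A$ is ``distinguished'': a noncommutative Weierstrass-type division by it assigns to every element of $\Lambda_{O}(G)^d$ a unique remainder in $\Lambda_{O}(H)^d$. Concretely, the relations of $\tilde{M}$ read $\tilde{\gamma}e_i\equiv e_iA=\sum_j a_{ij}e_j$, and iterating them — using only the commutation rule, which merely conjugates the $\Lambda_{O}(H)$-coefficients — rewrites each $\tilde{\gamma}^ne_i$ as an element of $\Lambda_{O}(H)^d$, the rewriting of an arbitrary element converging because $\tilde{\gamma}-1$ and $I_d-A$ are topologically nilpotent and the entries of $A$ lie in the compact ring $\Lambda_{O}(H)$. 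Existence of this normal form is surjectivity of $\psi$ and its uniqueness is injectivity. (This is the matrix analogue of \cite[Lemma 1]{joz}.)

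For (ii), since $\psi$ is already $\Lambda_{O}(H)$-linear and $G$ is topologically generated by $H$ together with $\tilde{\gamma}$, it suffices to check $\psi(\tilde{\gamma}*x)=\tilde{\gamma}\,\psi(x)$ for $x=\sum_k a_ke_k\in\Lambda_{O}(H)^d$, where $*$ denotes the transported action. By \eqref{for}, $\tilde{\gamma}*x=\big(\sum_k\tilde{\gamma}a_k\tilde{\gamma}^{-1}e_k\big)A$, so $\psi(\tilde{\gamma}*x)=\big(\sum_k\tilde{\gamma}a_k\tilde{\gamma}^{-1}e_k\big)A$ in $\tilde{M}$. In $\tilde{M}$ right multiplication by $A$ equals right multiplication by $\tilde{\gamma}I_d$, and in $\Lambda_{O}(G)^d$ one has $\tilde{\gamma}e_i=e_i(\tilde{\gamma}I_d)$; using these to pull $\tilde{\gamma}$ back onto the left through each scalar gives $\psi(\tilde{\gamma}*x)=\sum_k\tilde{\gamma}a_ke_k=\tilde{\gamma}\,\psi(x)$, as needed.

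The step requiring the most care is (i): making precise the skew power series structure of $\Lambda_{O}(G)$ over $\Lambda_{O}(H)$ and the convergence and uniqueness of the normal form — equivalently, the legitimacy of the noncommutative division by $\tilde{\gamma}I_d-A$, which rests on the unipotence of $A\bmod\mathfrak m_{H}$ coming from continuity of the $G$-action. Part (ii) is then a short and formal manipulation with the defining relation $\tilde{\gamma}e_i\equiv e_iA$, and the conclusion follows by composing with $\phi$.
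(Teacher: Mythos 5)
Your proposal is correct and follows essentially the same route as the paper: you use the same presentation $\tilde{M}=\Lambda_{O}(G)^d/\Lambda_{O}(G)^d(\tilde{\gamma}I_d-A)$, the same map $\psi$ induced by $\Lambda_{O}(H)\subset\Lambda_{O}(G)$, and the identical computation with $\tilde{\gamma}e_i=e_i(\tilde{\gamma}I_d)$ and the relation ``right multiplication by $A$ equals right multiplication by $\tilde{\gamma}I_d$'' to verify $\Lambda_{O}(G)$-linearity. The only difference is that you flesh out the step the paper dismisses as ``easily shown'' (that $\psi$ is a bijection of $\Lambda_{O}(H)$-modules) via the skew power series structure of $\Lambda_{O}(G)$ over $\Lambda_{O}(H)$ and a Weierstrass-type division resting on the unipotence of $A$ modulo $\mathfrak m_H$, which is valid in the pro-$p$ setting in which the lemma is actually applied.
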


\begin{proposition}\label{dim}
Let $G$ be a compact, pro-$p$ $p$-adic Lie group without any element of order $p$ and $M$ be a finitely generated $\Lambda_{O}(G)$-module which is also a finitely generated free  $\Lambda_{O}(H)$-module of rank $d$. Then  $M$  has  $\Lambda_{O}(G)$-projective dimension  equal to  $1$. 
\end{proposition}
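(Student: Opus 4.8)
The plan is to upgrade the presentation of $M$ furnished by Lemma \ref{iso} to a free resolution of length one. Keep the notation of the discussion preceding Lemma \ref{iso}: fix the $\Lambda_O(H)$-isomorphism $\phi\colon M\cong\Lambda_O(H)^d$ and the matrix $A\in M_d(\Lambda_O(H))$ of $\tilde{\gamma}$, and let $\varphi\colon\Lambda_O(G)^d\to\Lambda_O(G)^d$ be right multiplication by $\tilde{\gamma}I_d-A$. By Lemma \ref{iso}, $\coker(\varphi)\cong M$, i.e. there is an exact sequence
\[ \Lambda_O(G)^d\;\xrightarrow{\varphi}\;\Lambda_O(G)^d\;\longrightarrow\;M\;\longrightarrow\;0 . \]
Hence the entire content of the bound $\mathrm{pd}_{\Lambda_O(G)}M\le1$ is the injectivity of $\varphi$: once that is known, this sequence is a length-one free resolution of $M$. (Alternatively, $\mathrm{pd}_{\Lambda_O(G)}M\le1$ can be obtained without the explicit resolution from the Hochschild--Serre spectral sequence for $1\to H\to G\to\Gamma\to1$, using that $M$ is $\Lambda_O(H)$-projective so $\Ext^q_{\Lambda_O(H)}(M,-)=0$ for $q\ge1$, together with $\mathrm{cd}(\Gamma)=1$; but the explicit resolution is the more useful statement.)

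For the injectivity of $\varphi$ I would descend to the skew Laurent polynomial subring $S:=\Lambda_O(H)[\tilde{\gamma}^{\pm1};\sigma]$ of $\Lambda_O(G)$, where $\sigma$ denotes conjugation by $\tilde{\gamma}$; that the powers $\tilde{\gamma}^{n}$ ($n\in\ZZ$) are $\Lambda_O(H)$-linearly independent in $\Lambda_O(G)$, so that $S$ is indeed such a subring, is standard (reduce to the finite quotients of $G$, where the group algebra of a subgroup embeds). The entries of $\tilde{\gamma}I_d-A$ lie in $S$, and right multiplication by $\tilde{\gamma}I_d-A$ on $S^d$ is injective by a leading-term argument: if $0\neq v\in S^d$ has $\tilde{\gamma}$-degree $N$ with leading coefficient $v_N\neq0$, then, using $\tilde{\gamma}^{n}a=\sigma^{n}(a)\tilde{\gamma}^{n}$ for $a\in\Lambda_O(H)$, the vector $v(\tilde{\gamma}I_d-A)=v\tilde{\gamma}-vA$ has $\tilde{\gamma}$-degree $N+1$ with leading coefficient $v_N$, hence is nonzero. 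Now $\Lambda_O(G)$ is flat over $S$ (it is the completion of the Noetherian ring $S$ with respect to a suitable ideal), and $\varphi$ is the base change along $S\to\Lambda_O(G)$ of this injective $S$-linear map; therefore $\varphi$ is injective, and $\mathrm{pd}_{\Lambda_O(G)}M\le1$.

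For the reverse inequality I would argue by contradiction. Note $d\ge1$ (else $M=0$ and the statement is vacuous), so $M\neq0$. If $M$ were $\Lambda_O(G)$-projective then, $\Lambda_O(G)$ being a local ring ($G$ is pro-$p$), $M$ would be $\Lambda_O(G)$-free, say $M\cong\Lambda_O(G)^{r}$ with $r\ge1$; but $M$ is finitely generated over $\Lambda_O(H)$, which would force $\Lambda_O(G)$ itself to be finitely generated over $\Lambda_O(H)$, contradicting the fact that $\Lambda_O(G)/I_H\Lambda_O(G)\cong\Lambda_O(\Gamma)$ (with $I_H$ the augmentation ideal of $\Lambda_O(H)$) is a power series ring over $O$, hence not finitely generated over $O$. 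Thus $\mathrm{pd}_{\Lambda_O(G)}M\ge1$, and combining the two inequalities gives $\mathrm{pd}_{\Lambda_O(G)}M=1$. The only genuinely nontrivial point is the injectivity of $\varphi$: the identity $\coker(\varphi)\cong M$ is formal from Lemma \ref{iso}, but showing $\varphi$ has no kernel uses the structure of $\Lambda_O(G)$ as a skew extension of $\Lambda_O(H)$; the rest is bookkeeping.
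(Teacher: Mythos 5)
Your reduction of the upper bound to the injectivity of $\varphi$ (right multiplication by $\tilde{\gamma}I_d-A$ on $\Lambda_{O}(G)^d$) is exactly the right target, and your lower-bound argument (local ring, so projective implies free, contradicting finite generation over $\Lambda_{O}(H)$ via $\Lambda_{O}(G)/I_H\Lambda_{O}(G)\cong\Lambda_{O}(\Gamma)$) is correct. The gap is in how you prove injectivity. You reduce to the skew Laurent subring $S=\Lambda_{O}(H)[\tilde{\gamma}^{\pm1};\sigma]$ (that part is fine: the linear independence of the $\tilde{\gamma}^n$ over $\Lambda_{O}(H)$ and the leading-coefficient argument both work), but then you invoke flatness of $\Lambda_{O}(G)$ as an $S$-module with the one-line justification that $\Lambda_{O}(G)$ ``is the completion of the Noetherian ring $S$ with respect to a suitable ideal.'' Two nontrivial claims are hidden here: (i) that $\Lambda_{O}(G)$ really is the $J$-adic completion of $S$ for some two-sided ideal $J$ --- density of $S$ is clear, but you must identify the induced filtration $S\cap\mathfrak m^n$ ($\mathfrak m$ the maximal ideal of $\Lambda_{O}(G)$) with a $J$-adic one; and (ii) that such a completion is flat. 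The latter is a theorem of \emph{commutative} algebra; for noncommutative Noetherian rings, adic completion need not be flat without an Artin--Rees type property for $J$ (normalizing/polycentral generators, or a Zariskian filtration), and verifying this for the ideal generated by the maximal ideal of $\Lambda_{O}(H)$ and $\tilde{\gamma}-1$ in the skew setting is precisely the delicate technical content of the skew power series literature (Schneider--Venjakob). So, while the flatness statement is plausibly true, as written the decisive step of your proof rests on an unproved assertion.

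The paper bypasses this entirely with a rank count over the skew field of fractions: since $M$ is finitely generated over $\Lambda_{O}(H)$, it is a torsion $\Lambda_{O}(G)$-module; $\Lambda_{O}(G)$ is Noetherian without zero divisors, so its total skew field of fractions $Q_{O}(G)$ exists and is a flat (Ore) localization; applying $-\otimes_{\Lambda_{O}(G)}Q_{O}(G)$ to the presentation $\Lambda_{O}(G)^d\by{\varphi}\Lambda_{O}(G)^d\lrta M\lrta 0$ and comparing $Q_{O}(G)$-dimensions gives $\Ker(\varphi)\otimes Q_{O}(G)=0$, i.e.\ $\Ker(\varphi)$ is torsion, hence zero inside the torsion-free module $\Lambda_{O}(G)^d$. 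You can repair your write-up either by substituting this argument for the descent to $S$, or by actually supplying the completion/Artin--Rees input needed to justify flatness of $\Lambda_{O}(G)$ over $S$; as it stands, the former is far shorter and uses only facts already quoted in the paper.
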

\begin{proof}
Since  $M$ is a finitely generated $\Lambda_{O}(H)$-module, it is a torsion $\Lambda_{O}(G)$-module. In particular, it is not a free $\Lambda_{O}(G)$-module. Thus the $\Lambda_{O}(G)$ projective dimension of $M$  $\geq 1$. From Lemma \ref{iso}, there is a $\Lambda_{O}(G)$-isomorphism, $M\cong \frac{\Lambda(G)^d}{\Lambda(G)^d(\tilde{\gamma}I_d -A)}$ for a $d\times d$ matrix $A$ with entries in $\Lambda_{O}(H)$. Thus we have a resolution 
\begin{equation}\label{re}
  \Lambda_{O}(G)^d \stackrel{f}{\lrta}  \Lambda_{O}(G)^d \stackrel{q}{\lrta} M \lrta 0 
  \end{equation}
of left $ \Lambda_{O}(G)$-module where the second map $f$ is multiplication by $\tilde{\gamma}I_d -A$ from right.  Set $K = \mathrm{Ker}(f)$. We claim that $K$ is a (finitely generated) $ \Lambda_{O}(G)$ torsion module. Note that as $G$ is a compact $p$-adic Lie group, $ \Lambda_{O}(G)$ is a (left and right) noetherian ring  and as $G$ has no element of order $p$, $ \Lambda_{O}(G)$ has no non-zero left or right  zero divisors. It follows that the total skew field of fraction $ Q_{O}(G)$ of $ \Lambda_{O}(G)$ exists (cf. \cite[\S 1.1]{sh}). As $M$ is a torsion $ \Lambda_{O}(G)$ module, $ M \otimes_{\Lambda_{O}(G)}Q_{O}(G) =0$. Hence applying $-\otimes_{\Lambda_{O}(G)}Q_{O}(G)$  on \eqref{re}  and considering the additive property of the dimension of finite dimensional $Q_{O}(G)$ vector spaces in a short exact sequence, we deduce that $K\otimes_{\Lambda_{O}(G)}Q_{O}(G) =0$. This, in turn proves the claim that $K$ is  a $ \Lambda_{O}(G)$ torsion module. As $\Lambda_{O}(G)^d$ is a free $ \Lambda_{O}(G)$ module, it  has no non-zero $\Lambda_{O}(G)$ torsion submodule. Hence,  we deduce  that $K=0$. 
\begin{equation}\label{re3}
 0 \lrta  \Lambda_{O}(G)^d \stackrel{f}{\lrta}  \Lambda_{O}(G)^d \stackrel{q}{\lrta} M \lrta 0 
  \end{equation}
  is a short exact sequence and therefore the projective dimensions of $M(\rho)$ as a $ \Lambda_{O}(G)$-module is $\leq 1$.  This completes the proof of the proposition.
\end{proof}

\begin{proposition}\label{forfree}
Let $G$ be  a pro-$p$, $p$-adic Lie group without any element of order $p$ and $M$ be a finitely generated $\Lambda_{O}(G)$-module which is also a finitely generated free  $\Lambda_{O}(H)$-module of rank $d$. Then  there is a countable subset $S_M$ of the set $S : = \{ \text{all continuous characters from  }\Gamma \lra \ZZ_p^\times\}$ such that if we choose and fix any $\rho \in S \setminus S_M$,  the $U$-Euler characteristic of $M(\rho)$ exists for every open normal subgroup $U$ of $G$ and we have $\chi(U,M(\rho))=\#H_0(U,M(\rho))$. 
\end{proposition}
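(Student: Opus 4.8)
The plan is to produce, for every $\rho$, an explicit free resolution of $M(\rho)$ of length one over $\Lambda_O(G)$, compute $H_*(U,M(\rho))$ from it, and thereby reduce the whole statement to the non-vanishing of a single polynomial in the variable $\rho(\gamma)$.

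First I would record the effect of the twist on the matrix $A$ of Lemma \ref{iso}. Since $\rho$ is inflated from $\Gamma = G/H$, the group $H$ acts trivially on $\ZZ_p(\rho)$, so $M(\rho)$ is again finitely generated and free of rank $d$ over $\Lambda_O(H)$; fixing a $\ZZ_p$-generator $v$ of $\ZZ_p(\rho)$ and using the $\Lambda_O(H)$-basis $\{e_i\otimes v\}$, one computes $\tilde\gamma*(e_i\otimes v)=(\tilde\gamma*e_i)\otimes\rho(\gamma)v=\rho(\gamma)\sum_j a_{ij}(e_j\otimes v)$, so the matrix attached to $M(\rho)$ by Lemma \ref{iso} is $\rho(\gamma)A$. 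Combining Lemma \ref{iso} with Proposition \ref{dim} applied to $M(\rho)$ then gives a short exact sequence of left $\Lambda_O(G)$-modules
\[ 0\lra \Lambda_O(G)^d \stackrel{f_\rho}{\lra} \Lambda_O(G)^d \stackrel{q}{\lra} M(\rho)\lra 0, \]
where $f_\rho$ is right multiplication by $\tilde\gamma I_d-\rho(\gamma)A\in M_d(\Lambda_O(H))$.

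Next I would fix an open normal subgroup $U$ of $G$ and apply $H_*(U,-)$, i.e. $(-)_U=O\otimes_{\Lambda_O(U)}(-)$, to this resolution. Because $[G:U]<\infty$, $\Lambda_O(G)$ is free as a left $\Lambda_O(U)$-module, so $\Lambda_O(G)^d$ is $\Lambda_O(U)$-free, its $U$-coinvariants $F_U:=(\Lambda_O(G)^d)_U$ form a free $O$-module of finite rank $d[G:U]$, and $H_i(U,\Lambda_O(G)^d)=0$ for $i\geq 1$. Hence the long exact homology sequence collapses to
\[ 0\lra H_1(U,M(\rho))\lra F_U \stackrel{\bar f_\rho}{\lra} F_U \lra H_0(U,M(\rho))\lra 0, \qquad H_i(U,M(\rho))=0\ \ (i\geq 2), \]
where $\bar f_\rho$ is the $O$-linear endomorphism of $F_U$ induced by $f_\rho$. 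Since $F_U$ is a finite free $O$-module, $H_0(U,M(\rho))=\coker\bar f_\rho$ is finite if and only if $D_U(\rho):=\det_O(\bar f_\rho)$ (well defined up to a unit of $O$) is non-zero, and in that case $H_1(U,M(\rho))=\Ker\bar f_\rho=0$ while $H_i(U,M(\rho))=0$ for $i\geq 2$; so $\chi(U,M(\rho))$ exists and equals $\#\coker\bar f_\rho=\#H_0(U,M(\rho))$, as required.

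It remains to control the set of $\rho$ for which some $D_U(\rho)$ vanishes. A continuous character $\rho:\Gamma\lra\ZZ_p^\times$ is determined by $t:=\rho(\gamma)\in 1+p\ZZ_p$ (as $\Gamma$ is pro-$p$ and $p$ is odd), and every $t\in 1+p\ZZ_p$ occurs, so $S$ is identified with $1+p\ZZ_p$. In a chosen $O$-basis of $F_U$ the matrix of $\bar f_\rho$ equals $C_0-tC_1$ for fixed matrices $C_0,C_1$ over $O$, so $D_U$ is a polynomial in $t$ of degree at most $d[G:U]$; moreover at $t=0$ the endomorphism $\bar f_\rho$ is right multiplication by the image of the invertible group element $\tilde\gamma$, hence an automorphism of $F_U$, so $D_U(0)\in O^\times$. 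Thus $D_U$ is a non-zero polynomial and $S_{M,U}:=\{\rho\in S:D_U(\rho)=0\}$ is finite. Since a compact $p$-adic Lie group has only countably many open normal subgroups, $S_M:=\bigcup_U S_{M,U}$ is a countable union of finite sets, hence countable, and by the previous step any $\rho\in S\setminus S_M$ has the required property. The one genuinely delicate point is this last non-vanishing: everything else is bookkeeping with the resolution, whereas the assertion that $D_U$ is not identically zero is exactly what the specialization $t=0$ supplies, and it is the heart of the argument.
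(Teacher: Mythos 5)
Your proof is correct and mirrors the paper's argument: both pass through the length-one free $\Lambda_{O}(G)$-resolution supplied by Proposition~\ref{dim}, take $U$-coinvariants to obtain the four-term exact sequence with two copies of $O^{d[G:U]}$ in the middle, and observe that finiteness of $H_0(U,M(\rho))$ then forces $H_1(U,M(\rho))=0$ and vanishing in higher degrees, so $\chi(U,M(\rho))=\#H_0(U,M(\rho))$. The only divergence is how the countable exceptional set and the finiteness of $H_0$ are produced: the paper simply invokes Theorem~\ref{known} from \cite{joz}, whereas you rederive it directly by tracking the twisted matrix $\rho(\gamma)A$ through Lemma~\ref{iso}, reducing for each $U$ to the nonvanishing of the polynomial $D_U(t)=\det(C_0-tC_1)$, and pinning this down via $D_U(0)\in O^\times$ (right multiplication by the invertible group element $\tilde\gamma$) --- this is precisely the mechanism underlying the cited result, so the two proofs coincide in substance, with yours being the more self-contained version.
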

\begin{proof}
From Lemma \ref{dim}, we have a $\Lambda_{O}(G)$ projective resolution of $M$ of the form
\[  0 \lrta \Lambda_{O}(G)^d \lrta   \Lambda_{O}(G)^d \lrta M \lrta 0 . \]
Twisting by a character $\rho$ of $\Gamma$ and taking co-invariance by an open normal subgroup $U$, we get and exact sequence
\[  0 \lrta H_1(U,M(\rho)) \lrta O^{d[G:U]} \lrta  O^{d[G:U]} \lrta H_0(U,M(\rho)) \lrta 0 . \]
 From Theorem \ref{known}, we get that $H_0(U,M(\rho))$ is finite for every $U$ once we have chosen  $\rho$ as in the statement of this proposition. This implies that $H_1(U,M(\rho))=0$. This proves the proposition.
\end{proof}
We are now ready to prove the Main Theorem.

{\it Proof of Main Theorem:}
By Lemma \ref{res}, there exists an open normal subgroup $G^{00}$ of $G$ containing $H$, a resolution
\begin{equation}\label{reso}
 0 \lrta N_k \lrta N_{k-1} \lrta \cdots \lrta N_1 \lrta M \lrta 0 
\end{equation}
 of $M$ by finitely generated $\Lambda_{O}(G^{00})$-modules $N_i$, $i=1, \cdots, k$ such that each $N_i$ is a free $\Lambda_{O}(H)$-modules.
Let $\Gamma^{00}:= G^{00}/H$. 

Since $\Gamma^{00}$ is a finite $p$-power index subgroup of $\Gamma$ and $\ZZ_p^\times$ has no element of order $p$, the following natural restriction map in \eqref{rest} is injective.
\begin{equation}\label{rest}
 \text{res} :~  \Hom(\Gamma,\ZZ_p^\times) \lrta \Hom(\Gamma^{00},\ZZ_p^\times) 
\end{equation}

For each $N_i$ there is a countable subset $S_i \subset \Hom(\Gamma^{00},\ZZ_p^\times)$ such that if we choose and fix any $\rho \in \Hom(\Gamma^{00},\ZZ_p^\times) \setminus  S_i$, then  $\chi(U,N_i(\rho))$ exists for every open normal subgroup $U$ of $G^{00}$.  Hence by choosing a    $\rho \in  \Hom(\Gamma^{00},\ZZ_p^\times) \setminus  \underset{1 \leq i \leq n} {\cup}S_i$,   simultaneously for every $i$, $\chi(U,N_i(\rho))$ exists for all open normal subgroup $U$ of $G^{00}.$ This in turn  from \eqref{reso}, implies that for any such chosen $\rho: \Gamma^{00} \lra \ZZ_p^\times$, $\chi(U, M(\rho))$ exists for each open normal subgroup $U$ of $G^{00}$.

Let $U$ be an open normal subgroup of $G$ and choose and fix a $\rho: \Gamma \lra \ZZ_p^\times$ such that $\rho|_{\Gamma^{00}} \notin   \underset{1 \leq i \leq n} {\cup}S_i$. Then by preceding discussion, $\chi\big(U\cap G^{00},M(\rho|_{\Gamma^{00}})\big)$-exists. Moreover, applying Lemma \ref{prop} and \eqref{rest}, it follows that $\chi(U,M(\rho))$ also exist. This completes the proof. \qed

 \section{Application}\label{3section}
 \subsection{Akashi Series and Euler Characteristic}\label{appli1}
Let $G$ be as before and set $ \Lambda(G) : =\Lambda_{\mathbb{Z}_p}(G)$. Moreover for $G =\Gamma$, we will sometime write $\Lambda :=  \Lambda(\Gamma)$. Fix an isomorphism $\Lambda \cong \ZZ_p[[X]]$ by sending $\gamma$ to $1+X$.  For a  finitely generated torsion $\Lambda(\Gamma)$-module $N$, let $char_{\Lambda(\Gamma)}(N)$  denote its usual characteristic element  $ \in (\Lambda(\Gamma) \setminus \{0\})/(\Lambda(\Gamma))^\times$. Write $Q(\Lambda(\Gamma))$ for the quotient field of $\Lambda(\Gamma)$. We recall the following definition of Akashi Series.

\begin{definition}[cf. \cite{CFKSV}]
Let $M \in \mathfrak M_H(G)$. We say the {\it Akashi Series  } of $M$ with respect to $H$ exists, if the Galois homology groups $H_i(H,M)$ are finitely generated torsion $\Lambda(\Gamma)$-modules for every $i\geq 0$.   If the Akashi series of $M$ exists, then it is given by 
\begin{equation}
Ak^G_H(M) = \prod_i \big(char_{\Lambda(\Gamma)} H_i(H,M)\big)^{(-1)^i} \in Q(\Lambda(\Gamma))^\times/\Lambda(\Gamma)^\times
\end{equation}
\end{definition}

\begin{lemma}\label{exist}
Let $M\in \mathfrak M_H(G)$. Then $Ak_{H\cap U}^U(M)$ exists for every open normal subgroup $U$ of $G$. 
\end{lemma}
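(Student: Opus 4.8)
The plan is to reduce the existence of the Akashi series for all open normal subgroups $U$ of $G$ to the case $U = G$, and then to handle that case by the structure theory of modules in $\mathfrak M_H(G)$. First I would observe that if $U$ is an open normal subgroup of $G$, then $M$, viewed as a $\Lambda_O(U)$-module, still lies in $\mathfrak M_H(U)$: indeed $H \cap U$ is open in $H$, so $M/M(p)$ being finitely generated over $\Lambda_O(H)$ forces it to be finitely generated over $\Lambda_O(H\cap U)$ as well (a finitely generated module over $\Lambda_O(H)$ is finitely generated over the subring $\Lambda_O(H\cap U)$ since $\Lambda_O(H)$ is itself finitely generated as a $\Lambda_O(H\cap U)$-module). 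Moreover $U/(H\cap U) \hookrightarrow G/H = \Gamma$ is open, hence isomorphic to $\Z_p$, so the setup hypotheses are preserved. Therefore it suffices to prove the statement for $U = G$, i.e. to show $Ak^G_H(M)$ exists whenever $M \in \mathfrak M_H(G)$.

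For $U = G$, I would first dispose of the $p$-torsion part. Write the exact sequence $0 \to M(p) \to M \to M/M(p) \to 0$. The module $M(p)$ is a finitely generated $p$-primary $\Lambda_O(G)$-module; one checks (e.g. by the argument in \cite[\S 4]{CFKSV}, using that $H$ has no element of order $p$ so $\Lambda_O(H)$ has finite global dimension and $\Z_p$-cohomological dimension controls the homology) that each $H_i(H, M(p))$ is a finitely generated $p$-primary, hence torsion, $\Lambda(\Gamma)$-module. For $N := M/M(p)$, which is finitely generated over $\Lambda_O(H)$, each $H_i(H,N)$ is a finitely generated $\Lambda(\Gamma)$-module (since $\Lambda_O(H)$ has finite $\Z_p$-cohomological dimension, the homology is computed by a finite free resolution and each $H_i$ is a subquotient of a finitely generated $\Z_p$-module, a fortiori finitely generated over $\Lambda(\Gamma)$); in fact as a $\Lambda(\Gamma)$-module it is even finitely generated over $\Z_p$ when $\dim H \geq 1$, hence torsion over $\Lambda(\Gamma)$. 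Feeding these two facts into the long exact homology sequence for $0 \to M(p) \to M \to N \to 0$ shows each $H_i(H,M)$ sits in an exact sequence with finitely generated torsion $\Lambda(\Gamma)$-modules on either side, hence is itself finitely generated and torsion over $\Lambda(\Gamma)$. That is precisely the condition for $Ak^G_H(M)$ to exist.

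The main obstacle, and the point requiring care, is the finiteness-of-generation and torsion claim for $H_i(H,M(p))$ and for $H_i(H, M/M(p))$ as $\Lambda(\Gamma)$-modules: one must combine the finite global (equivalently, cohomological) dimension of $\Lambda_O(H)$ — which holds because $H$, as a closed subgroup of $G$, has no element of order $p$ — with a Hochschild–Serre / base-change comparison to see that $H_i(H, -)$ of a finitely generated $\Lambda_O(G)$-module is finitely generated over $\Lambda_O(\Gamma) = \Lambda(\Gamma)\otimes_{\Z_p}O$ (and hence over $\Lambda(\Gamma)$), together with a rank computation over the skew field of fractions $Q_O(G)$ in the spirit of Proposition \ref{dim} to force the torsion property. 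This is essentially the content of \cite[\S 4]{CFKSV}, so I would cite it for the case $U=G$ and then invoke the reduction of the first paragraph to conclude the lemma for general $U$.
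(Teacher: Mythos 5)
Your argument is correct and takes essentially the same route as the paper: the key step in both is that $H \cap U$ has finite index in $H$, so $M/M(p)$ finitely generated over $\Lambda_O(H)$ forces it finitely generated over $\Lambda_O(H\cap U)$, after which one invokes \cite[Lemma 3.1]{CFKSV}. The extended sketch in your second and third paragraphs is just re-deriving that cited lemma for the case $U=G$; the paper simply applies it directly to the pair $(U, H\cap U)$ without the intermediate reduction, which is slightly shorter but logically equivalent.
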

\begin{proof}
Put $T = H \cap U$. Then $T$ is a finite index subgroup of $H$ and therefore  $\Lambda(H)$ is a finite module over $\Lambda(T)$. In particular, $M/M(p)$ is a finitely generated $\Lambda(T)$-module. Therefore from \cite[Lemma 3.1]{CFKSV}, $Ak_T^U(M)$ exist. 
\end{proof}

For  a character  $\rho : \Gamma \lrta \ZZ_p^\times $,  let  $\tilde{\rho} : \Lambda(\Gamma) \lrta \ZZ_p$ denote the  corresponding ring homomorphism induced by the evaluation map.   We extension this homomorphism to 
\[   \tilde{\rho} :  Q(\Lambda(\Gamma))^\times \lrta \mathbb Q_p \cup \{\infty\} \]
as $\tilde{\rho}(f/g)=\tilde{\rho}(f)/\tilde{\rho}(g)$ if $f$ and $g$ have no common divisor and $\tilde{\rho}(g)\neq 0$. If $f$ and $g$ have no common divisor and $\tilde{\rho}(g)=0$ then put $\tilde{\rho}(f/g)=\infty$. For simplicity of the notation we denote $\widetilde{\rho^{-1} }$ by $\tilde{\rho}^{-1}$.

\begin{definition}
We say $\tilde{\rho}(f/g)$ exists if $\tilde{\rho}(f/g)\neq 0,\infty$. 
\end{definition}

 Let $M$ be a finitely generated torsion $\Lambda(\Gamma)$-module. It can be shown that if $\chi(\Gamma,M)$ exists then $\chi(\Gamma,M)=char_{\Lambda(\Gamma)}(M)|_{X=0}=\tilde{\rho}(char_{\Lambda(\Gamma)}(M))$, where $\rho$ denotes the trivial character of $\Gamma$ (see \cite[Lemma 4.2]{Gr}). More generally we have the following lemma

 \begin{lemma}\label{twist}
If $\rho$ is an arbitrary character of $\Gamma$ and $\chi(\Gamma,M(\rho))$ exists then $\chi(\Gamma,M(\rho))=\tilde{\rho}^{-1}(char_{\Lambda(\Gamma)}(M))$.
 \end{lemma}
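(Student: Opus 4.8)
The plan is to reduce the twisted statement to the untwisted one that is already recalled just before the lemma, namely that if $\chi(\Gamma, N)$ exists for a finitely generated torsion $\Lambda(\Gamma)$-module $N$, then $\chi(\Gamma, N) = \tilde{\eta}(char_{\Lambda(\Gamma)}(N))$ with $\eta$ the trivial character. The key observation is that twisting a $\Lambda(\Gamma)$-module by a character $\rho$ of $\Gamma$ corresponds, under the identification $\Lambda(\Gamma) \cong \Z_p[[X]]$, to the ring automorphism $t_\rho$ of $\Lambda(\Gamma)$ sending $\gamma \mapsto \rho(\gamma)^{-1}\gamma$ (equivalently $1+X \mapsto \rho(\gamma)^{-1}(1+X)$): concretely, $M(\rho) \cong M \otimes_{\Lambda(\Gamma), t_\rho} \Lambda(\Gamma)$ as $\Lambda(\Gamma)$-modules, i.e. $M(\rho)$ is $M$ with the module structure pulled back along $t_\rho$. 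First I would verify this standard compatibility directly on generators, using that $\Z_p(\rho)$ is free of rank one over $\Z_p$ and $\gamma$ acts on it by $\rho(\gamma)$.

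Next I would track the characteristic ideal through this automorphism. Since $t_\rho$ is a ring automorphism of $\Lambda(\Gamma)$, it carries the characteristic element of $M$ to that of $M(\rho)$: $char_{\Lambda(\Gamma)}(M(\rho)) = t_\rho\big(char_{\Lambda(\Gamma)}(M)\big)$ in $(\Lambda(\Gamma)\setminus\{0\})/\Lambda(\Gamma)^\times$. This is because $t_\rho$ induces an exact autoequivalence of the category of finitely generated torsion $\Lambda(\Gamma)$-modules, hence respects pseudo-isomorphism classes and the multiplicativity of $char$, and it sends the elementary module $\Lambda(\Gamma)/(f)$ to $\Lambda(\Gamma)/(t_\rho(f))$. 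Then apply the untwisted formula to $N = M(\rho)$: assuming $\chi(\Gamma, M(\rho))$ exists,
\[
\chi(\Gamma, M(\rho)) = \tilde{\eta}\big(char_{\Lambda(\Gamma)}(M(\rho))\big) = \tilde{\eta}\big(t_\rho(char_{\Lambda(\Gamma)}(M))\big),
\]
where $\eta$ is the trivial character, so $\tilde{\eta}$ is evaluation at $X = 0$, i.e. $\gamma \mapsto 1$. Now $\tilde{\eta}\circ t_\rho$ is the ring homomorphism $\Lambda(\Gamma) \to \Z_p$ sending $\gamma \mapsto \rho(\gamma)^{-1}$, which is precisely $\widetilde{\rho^{-1}} = \tilde{\rho}^{-1}$. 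Hence $\chi(\Gamma, M(\rho)) = \tilde{\rho}^{-1}(char_{\Lambda(\Gamma)}(M))$, as claimed. One should also note that the hypothesis "$\chi(\Gamma, M(\rho))$ exists" forces $\tilde{\rho}^{-1}(char_{\Lambda(\Gamma)}(M)) \neq 0, \infty$, so the right-hand side is a well-defined element of $\Z_p^\times \otimes \Q$ up to $p$-adic units, consistent with the definition given above; this follows since $H_0(\Gamma, M(\rho))$ and $H_1(\Gamma, M(\rho))$ being finite is equivalent to $\tilde{\eta}(char_{\Lambda(\Gamma)}(M(\rho)))$ being a nonzero $p$-adic number, by the structure theory of $\Lambda(\Gamma)$-modules and the $\gamma - 1$ (respectively $\rho(\gamma)^{-1}\gamma - 1$) exact sequence.

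The only genuinely delicate point is the compatibility $char_{\Lambda(\Gamma)}(M(\rho)) = t_\rho(char(M))$ together with the correct identification of $\tilde{\eta}\circ t_\rho$ with $\tilde{\rho}^{-1}$ rather than $\tilde{\rho}$; the inverse appears because twisting the module by $\rho$ pulls the structure back along $t_\rho$, which on the evaluation side unwinds to substituting $\gamma \mapsto \rho(\gamma)^{-1}$. Everything else is a routine transfer of the known untwisted Euler characteristic formula through a ring automorphism, and I would keep the write-up short by citing \cite[Lemma 4.2]{Gr} for the untwisted case and only spelling out the automorphism $t_\rho$ and its effect on $char$.
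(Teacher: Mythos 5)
Your argument is correct and follows essentially the same route as the paper: its $Tw_{\rho^{-1}}$ is exactly your $t_\rho$, the identity $char_{\Lambda(\Gamma)}(M(\rho)) = Tw_{\rho^{-1}}\big(char_{\Lambda(\Gamma)}(M)\big)$ is quoted there from \cite[Chapter VI, Lemma 1.2]{R} rather than re-proved via elementary modules, and the conclusion is reached, as you do, by composing with evaluation at $X=0$ and invoking \cite[Lemma 4.2]{Gr}. The only blemish is your parenthetical gloss, which is off by a direction: $M(\rho)$ is the base change of $M$ along $t_\rho$ (equivalently the pullback along $t_\rho^{-1}$, i.e.\ along $\gamma\mapsto\rho(\gamma)\gamma$), not the pullback along $t_\rho$ itself; the formulas you actually use downstream are the correct ones.
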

\begin{proof}
For a character $\tau : \Gamma \lrta \ZZ_p^\times$ define $Tw_\tau : \Lambda \lrta \Lambda $ be the $\ZZ_p$-linear isomorphism induced by 
\[   Tw_\tau(\gamma) = \tau(\gamma)\gamma  \] 
For a finitely generated torsion $\Lambda$-module $B$ from \cite[Chapter VI, Lemma 1.2]{R} we have
\[  Tw_\tau(char_\Lambda(B\otimes \tau))  = char_\Lambda(B) \]
Taking $B=M\otimes \rho$ and $\tau=\rho^{-1}$ we get
\begin{equation}\label{tw}
  Tw_{\rho^{-1}}(char_\Lambda(M))=char_\Lambda(M\otimes \rho) 
 \end{equation} 
We get
\begin{equation}\label{tw1}
 \tilde{\rho}^{-1}(char_\Lambda(M))= Tw_{\rho^{-1}}(char_\Lambda(M))|_{X=0}=char_\Lambda(M\otimes \rho)|_{X=0}=\chi(\Gamma,M(\rho)).
\end{equation}
\end{proof}
 
\begin{lemma}\label{eva}
Let $M  \in \mathfrak M_H(G)$ and $U$ be an open normal subgroup of $G$. For a character $\rho : \Gamma \lrta \ZZ_p^\times $ if $\chi(U,M(\rho))$ exist then  $ \tilde{\rho}^{-1}(Ak_{U\cap H}^U(M)) $ exits, too, and  $\chi(U,M(\rho)) = \tilde{\rho}^{-1}(Ak_{U\cap H}^U(M))$.
\end{lemma}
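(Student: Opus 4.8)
The plan is to connect the Euler characteristic $\chi(U, M(\rho))$, which is computed from the homology groups $H_i(U, M(\rho))$, to the Akashi series $Ak_{U\cap H}^U(M)$, which is built out of the $\Lambda(\Gamma^U)$-modules $H_i(U\cap H, M)$ where $\Gamma^U := U/(U\cap H)$. The bridge between these two is the Hochschild--Serre spectral sequence for the group extension $1 \to U\cap H \to U \to \Gamma^U \to 1$, which degenerates enough to give a relation between the alternating product of orders of $H_n(U, M(\rho))$ and the alternating product of the quantities $\chi(\Gamma^U, H_i(U\cap H, M)(\rho))$. By Lemma \ref{exist}, each $H_i(U\cap H, M)$ is a finitely generated torsion $\Lambda(\Gamma^U)$-module, so Lemma \ref{twist} applies to each of them.

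First I would reduce to the case $U = G$ (so $U\cap H = H$, $\Gamma^U = \Gamma$): the general statement is just this statement applied to the group $U$ in place of $G$, with $U\cap H$ in place of $H$, noting that $\Gamma^U$ is still $\cong \Z_p$ (an open subgroup of $\Gamma$). Second, I would take the Hochschild--Serre spectral sequence $E^2_{i,j} = H_i(\Gamma, H_j(H, M(\rho))) \Rightarrow H_{i+j}(G, M(\rho))$. Since $\Gamma \cong \Z_p$ has cohomological dimension $1$, this spectral sequence has only two nonzero columns ($i = 0, 1$), so it degenerates at $E^2$ up to the standard two-column boundary maps and yields short exact sequences
\[
0 \to H_0(\Gamma, H_n(H, M(\rho))) \to H_n(G, M(\rho)) \to H_1(\Gamma, H_{n-1}(H, M(\rho))) \to 0.
\]
Because $\chi(G, M(\rho))$ is assumed to exist, all the $H_n(G,M(\rho))$ are finite; hence each $H_i(\Gamma, H_j(H,M(\rho)))$ is finite, which is exactly the statement that $\chi(\Gamma, H_j(H, M)(\rho))$ exists for every $j$ (note $H_j(H,M(\rho)) \cong H_j(H,M)(\rho)$ as $\Gamma$-modules since $\Gamma$ acts trivially on the twist factor through $H$... more precisely the $\rho$-twist commutes with taking $H$-homology). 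Third, taking alternating products of orders in these short exact sequences gives
\[
\chi(G, M(\rho)) = \prod_j \chi\big(\Gamma, H_j(H, M)(\rho)\big)^{(-1)^j}.
\]
Fourth, I apply Lemma \ref{twist} to each factor: $\chi(\Gamma, H_j(H,M)(\rho)) = \tilde\rho^{-1}(char_{\Lambda(\Gamma)} H_j(H,M))$, and in particular each of these is a nonzero finite $p$-adic number, so $\tilde\rho^{-1}$ is finite and nonzero on each characteristic element; multiplying, $\tilde\rho^{-1}$ of the product $Ak_H^G(M) = \prod_j (char_{\Lambda(\Gamma)} H_j(H,M))^{(-1)^j}$ is well-defined, equals $\prod_j \tilde\rho^{-1}(char_{\Lambda(\Gamma)} H_j(H,M))^{(-1)^j}$, and hence equals $\chi(G, M(\rho))$. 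Finiteness of each factor also guarantees $\tilde\rho^{-1}(Ak_H^G(M)) \neq 0, \infty$, i.e. it exists in the sense of the definition.

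The main obstacle I anticipate is the bookkeeping around $\tilde\rho^{-1}$ applied to a quotient of characteristic elements: a priori the numerator and denominator of $Ak_H^G(M)$ (i.e.\ the product over even $j$ versus odd $j$) might share a common divisor, or $\tilde\rho^{-1}$ might vanish or blow up on a shared factor, so one has to argue that cancellation of common factors does not affect the value of $\tilde\rho^{-1}$ on the quotient. This is handled precisely by the observation that each individual $\chi(\Gamma, H_j(H,M)(\rho)) = \tilde\rho^{-1}(char_{\Lambda(\Gamma)} H_j(H,M))$ is a finite nonzero $p$-adic number (since $\chi(\Gamma, H_j(H,M)(\rho))$ exists, being a subquotient setup of the finite $H_n(G, M(\rho))$), so $\tilde\rho^{-1}$ does not hit $0$ or $\infty$ on any of the factors, and the multiplicativity of $\tilde\rho^{-1}$ on elements of $Q(\Lambda(\Gamma))^\times$ with no such obstruction lets us pass freely between the product of values and the value of the product. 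A minor secondary point is to justify that $\rho$-twisting commutes with $H$-homology as $\Gamma$-modules, i.e. $H_j(H, M(\rho)) \cong H_j(H,M) \otimes \Z_p(\rho)$ with the correct $\Gamma = G/H$ action, which follows from the fact that $G$ acts diagonally on $M(\rho) = M \otimes \Z_p(\rho)$ and $H$ acts trivially on the $\Z_p(\rho)$ factor.
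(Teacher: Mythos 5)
Your proof is correct, and it reaches the same identity by a more self-contained route than the paper. The paper's proof twists the module first, invokes \cite[Lemma 4.2]{CSS} as a black box to get $\chi(U,M(\rho)) = Ak_{U\cap H}^U(M(\rho))|_{X=0}$ (finite and nonzero), and then untwists the Akashi series as a whole via the relation $Ak_{U\cap H}^U(M(\rho)) = Tw_{\rho^{-1}}\big(Ak_{U\cap H}^U(M)\big)$ coming from \eqref{tw} and $H_i(U\cap H, M(\rho)) \cong H_i(U\cap H,M)(\rho)$. You instead prove the needed degeneration directly: the Hochschild--Serre spectral sequence for $1 \to U\cap H \to U \to \Gamma_U \to 1$ has only two columns since $\Gamma_U \cong \Z_p$, giving the short exact sequences $0 \to H_0(\Gamma_U, H_n(U\cap H,M(\rho))) \to H_n(U,M(\rho)) \to H_1(\Gamma_U, H_{n-1}(U\cap H,M(\rho))) \to 0$, whence finiteness of all $E^2$-terms and the factorization $\chi(U,M(\rho)) = \prod_j \chi\big(\Gamma_U, H_j(U\cap H,M)(\rho)\big)^{(-1)^j}$; you then apply Lemma \ref{twist} termwise to each characteristic element, using Lemma \ref{exist} for the finitely generated torsion hypothesis. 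Your handling of the potential common-divisor issue in evaluating $\tilde{\rho}^{-1}$ on the quotient is sound: since each factor $\tilde{\rho}^{-1}\big(char_{\Lambda(\Gamma_U)} H_j(U\cap H,M)\big)$ is finite and nonzero, no cancellation can change the value of $\tilde{\rho}^{-1}$ on the reduced fraction, so the evaluation exists and equals the product. The trade-off is essentially one of packaging: the paper's argument is shorter because the spectral-sequence computation is outsourced to \cite{CSS} and the twist is handled once at the level of the Akashi series, while yours makes the mechanism explicit inside the paper and only needs the internal Lemmas \ref{exist} and \ref{twist}. Two small points you glossed over, both harmless: one should note that $H_j(U\cap H, M(\rho))$ vanishes for large $j$ (finite global dimension of $\Lambda(U\cap H)$, as $U\cap H$ has no $p$-torsion), so all products are finite, and in the reduction to $U$ one evaluates via the restriction of $\rho$ to $\Gamma_U = U/(U\cap H)$, exactly as the paper's statement implicitly does.
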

\begin{proof}
As $M/M(p)$ is a finitely generated $\Lambda(H)$-module;  $\frac{M(\rho)}{(M(\rho))(p)}$ is also a finitely generated $\Lambda(H)$-module. 
Therefore  $Ak_{U\cap H}^U(M(\rho))$ exists. Since $\chi(U,M(\rho))$ exists it follows  from \cite[Lemma 4.2]{CSS} that $Ak_{U\cap H}^U(M\otimes \rho))|_{X=0} $  is finite,  non-zero and  
\begin{equation}\label{tw2}
\chi(U,M(\rho))=  Ak_{U\cap H}^U(M(\rho))|_{X=0}  .
\end{equation}  
We have for each $i\geq 0$, $H_i(U\cap H,M(\rho))\cong  H_i(U\cap H,M)(\rho)$. Therefore from \eqref{tw} we get that 
\begin{equation}\label{tw3}
Ak_{U\cap H}^U(M(\rho))=Tw_{\rho^{-1}}(Ak_{U\cap H}^U(M)).
\end{equation}
Since $Ak_{U\cap H}^U(M\otimes \rho))|_{X=0} $ is finite and non-zero, $Tw_{\rho^{-1}}(Ak_{U\cap H}^U(M))|_{X=0} = \tilde{\rho}^{-1}(Ak_{U\cap H}^U(M))$ also exists and from \eqref{tw2} and \eqref{tw3} we get that 
  $\chi(U,M(\rho)) = \tilde{\rho}^{-1}(Ak_{U\cap H}^U(M))$.  This proves the lemma.
\end{proof}
As a consequence of Main Theorem and Lemma \ref{eva}, we have
\begin{corollary}\label{eva1}
Let $M \in \mathfrak M_H(G)$. Then there is a countable subset $S_M$ of the set all continuous characters from  $\Gamma \lra \ZZ_p^\times$ such that if we choose and fix any character $\rho $ outside $ S_M$, then   $\tilde{\rho}^{-1}(Ak_{U\cap H}^U(M))$ exists for all open normal subgroups $U$ of $G$ and satisfies $\chi(U,M(\rho)) = \tilde{\rho}^{-1}(Ak_{U\cap H}^U(M))$. \qed
\end{corollary}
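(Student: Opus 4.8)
The plan is to deduce Corollary \ref{eva1} by combining the Main Theorem with Lemma \ref{eva} and Lemma \ref{exist}, so essentially all the work has already been done and the proof is a short assembly. First I would invoke the Main Theorem — or rather the slightly stronger statement recorded in the Remark following it — to produce a countable subset $S_M \subset S = \{\text{continuous characters } \Gamma \to \ZZ_p^\times\}$ such that for every $\rho \in S \setminus S_M$, the Euler characteristic $\chi(U, M(\rho))$ exists for every open normal subgroup $U$ of $G$. Fix such a $\rho$.

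Next, for each open normal subgroup $U$ of $G$, I would apply Lemma \ref{exist} to guarantee that $Ak_{U\cap H}^U(M)$ exists (this is needed so that the right-hand side of the claimed identity is even defined); note that $Ak_{U\cap H}^U(M(\rho))$ likewise exists because $\frac{M(\rho)}{M(\rho)(p)}$ is finitely generated over $\Lambda(H)$, as observed in the proof of Lemma \ref{eva}. Then I would feed the fixed $\rho$ and each $U$ into Lemma \ref{eva}: since $\chi(U, M(\rho))$ exists by the choice of $\rho$, Lemma \ref{eva} yields at once that $\tilde{\rho}^{-1}(Ak_{U\cap H}^U(M))$ exists and that $\chi(U, M(\rho)) = \tilde{\rho}^{-1}(Ak_{U\cap H}^U(M))$. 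Since $U$ was an arbitrary open normal subgroup of $G$, this is exactly the assertion of the corollary.

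There is no real obstacle here — the corollary is a formal consequence of the two named ingredients. The only point that deserves a word of care is making sure that the single countable set $S_M$ works uniformly for \emph{all} open normal subgroups $U$ simultaneously; this is already built into the statement of the Main Theorem (and its strengthening), which asserts existence of $\chi(U, M(\rho))$ for every $U$ once $\rho \notin S_M$, so no further intersection over $U$ is required. One could also remark, for the reader's orientation, that when $G = \Gamma$ the identity of the corollary specializes to $\chi(\Gamma, M(\rho)) = \tilde{\rho}^{-1}(char_{\Lambda(\Gamma)}(M))$ of Lemma \ref{twist}, which is consistent since $Ak_{\{1\}}^\Gamma(M) = char_{\Lambda(\Gamma)}(M)$.

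I would write the proof as follows: ``By the Main Theorem (in the strengthened form of the Remark following it), there is a countable subset $S_M$ of $S$ such that for every $\rho \in S \setminus S_M$ the $U$-Euler characteristic $\chi(U, M(\rho))$ exists for every open normal subgroup $U$ of $G$. Fix such a $\rho$ and let $U$ be any open normal subgroup of $G$. By Lemma \ref{exist}, $Ak_{U\cap H}^U(M)$ exists. Since $\chi(U, M(\rho))$ exists, Lemma \ref{eva} gives that $\tilde{\rho}^{-1}(Ak_{U\cap H}^U(M))$ exists and that $\chi(U, M(\rho)) = \tilde{\rho}^{-1}(Ak_{U\cap H}^U(M))$. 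As $U$ was arbitrary, this proves the corollary.''
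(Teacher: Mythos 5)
Your proof is correct and follows exactly the route the paper intends: the corollary is stated there as an immediate consequence of the Main Theorem (in its strengthened, countable-exceptional-set form) combined with Lemma \ref{eva}, which is precisely your assembly. The extra invocation of Lemma \ref{exist} is harmless but not strictly needed, since the existence of the Akashi series is already supplied inside the proof of Lemma \ref{eva}.
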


 \subsection{Relation to the Iwasawa theory of elliptic curves}\label{apeq}

Throughout this section $E$ will be  an elliptic curve defined over a number field $K$ with good and ordinary reduction at all primes of $K$ dividing $p$. The $p^\infty$-Selmer group $S_p(E/L)$ of $E$ over a finite extension $L/K$ is defined as
\[ 0 \lrta S_p(E/L) \lrta H^1(L,E(p)) \lrta \prod_v H^1(L_v,E)  \]
where $E(p): = \underset{n \geq 1}{\cup}E(\bar{\Q})[p^n]$ and $v$ varies over set of finite primes of $L$. If $L_\infty/K$ is an infinite algebraic extension of $K$ then we define 
\[  S_p(E/L_\infty) :=  \varinjlim_L S_p(E/L) \]
where $L$ varies over number fields $L\subset L_\infty$ and the direct limit is taken over the natural corestriction maps.  
The natural action of $\gal(L_\infty/K)$  on $S_p(E/L_\infty)$ (endowed with discrete topology) makes  it a discrete $\Lambda(\mathrm{Gal}(L_\infty/K))$ module. For a $p$-adic Lie extension $L_\infty$, the  Pontryagin dual $$S_p(E/L_\infty)^\vee:= \mathrm{Hom}_{\text{cont}}\big (S_p(E/L_\infty), \Q_p/\Z_p\big )$$ is a finitely generated (see for example \cite{ov}) $\Lambda(\mathrm{Gal}(L_\infty/K))$ module.
 \begin{definition} 
 \begin{enumerate}
 \item A Galois extension $\K$  of $K$ is called {\it admissible} if  (i) $\K$ is unramified outside finitely many primes of $K$, (ii) $G:= \gal(\K/K)$ is a $p$-adic Lie group without any element of order $p$,  (iii) $K_{cyc}:= K \Q_\cyc \subset \K$. We write $\Gamma := \gal(K_{cyc}/K)$, $H := \gal(\K/K_{cyc})$.  
 \item An admissible extension $\K/K$ is said to be strongly admissible if for every prime $v|p$ of $K$ and a prime $w|v$ of $\K$, the completion at $w$, $K_{\infty, w}$ contains the unramified $\ZZ_p$-extension of $K_v$.  
\end{enumerate}
\end{definition}

\begin{conjecture}[$\mathfrak M_H(G)$ Conjecture; Conjecture 5.1, \cite{CFKSV}]
Let $E$ and $p$ be as above. Let $K_\infty/K$ be admissible. Then the $\Lambda(G)$-module  $S_p(E/\K)^\vee$ lies in $ \mathfrak M_H(G)$.  (Note that for $K_\infty = K_\cyc$ this is precisely Mazur's conjecture.)
\end{conjecture}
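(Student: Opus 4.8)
\emph{On a strategy of proof.} At the level of generality stated this remains an open conjecture: already for $\K = K_\cyc$, where $H = 1$, it is precisely Mazur's conjecture that $S_p(E/K_\cyc)^\vee$ is a torsion $\Lambda(\Gamma)$-module. I would therefore only aim to carry out the reduction by which the conjecture is, and can be, established in the currently accessible cases, for instance $E$ defined over $\QQ$ with $p$ a prime of good ordinary reduction and $\K/K$ strongly admissible.

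Write $X := S_p(E/\K)^\vee$; it is a finitely generated $\Lambda(G)$-module. The first step is a descent over $H$: by a standard topological Nakayama argument over $\Lambda(H)$ (see \cite{CFKSV}), in order to know that $X/X(p)$ is finitely generated over $\Lambda(H)$, i.e. that $X \in \mathfrak M_H(G)$, it suffices to show that $H_0\big(H, X/X(p)\big)$ is finite. Since $H_0(H, X)$ surjects onto $H_0\big(H, X/X(p)\big)$, with the $p$-power torsion of $H_0(H,X)$ as the only obstruction, it is enough to prove that the $\Lambda(\Gamma)$-module $H_0(H,X) = \big(S_p(E/\K)^H\big)^\vee$ is finitely generated over $\ZZ_p$; concretely, that it is $\Lambda(\Gamma)$-torsion with vanishing $\mu$-invariant.

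The second step is a control argument along $H = \gal(\K/K_\cyc)$: one compares $S_p(E/\K)^H$ with $S_p(E/K_\cyc)$ through the restriction map in Galois cohomology, whose kernel is $H^1\big(H, E(p)\big)$ and whose cokernel is controlled by a product, over the finitely many primes of $K_\cyc$ ramifying in $\K$, of local cohomology groups of $E$. Using that $E$ has good ordinary reduction at the primes above $p$, that $\K/K$ is ramified at only finitely many primes, and the weak Leopoldt conjecture for $E$ over $K_\cyc$, these terms are cofinitely generated — and, in the strongly admissible case, cotorsion with $\mu = 0$ — over $\Lambda(\Gamma)$. Feeding in Mazur's conjecture for $S_p(E/K_\cyc)^\vee$, one concludes that $H_0(H,X)$ is $\Lambda(\Gamma)$-torsion with $\mu = 0$, and the descent step then yields $X \in \mathfrak M_H(G)$.

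The main obstacle is that the two inputs of the control step are themselves deep and not available unconditionally: Mazur's conjecture over the cyclotomic tower is a theorem of Kato for $E/\QQ$ but is open for a general base field $K$, and the control argument genuinely uses structural information about $H$ — concretely the strong admissibility of $\K/K$, which makes the unramified $\ZZ_p$-extension available at the primes above $p$ and thereby forces the local terms to be cotorsion rather than merely cofinitely generated over $\Lambda(\Gamma)$. A proof along these lines therefore delivers, under suitable hypotheses, the implication ``Mazur's conjecture for $K_\cyc$ $\Rightarrow$ $\mathfrak M_H(G)$ conjecture for $\K$'', while the statement in full generality remains out of reach.
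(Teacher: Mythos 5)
This statement is a conjecture (quoted from \cite{CFKSV}), and the paper gives no proof of it; there is therefore no argument of the authors to compare yours against. You are right to treat it as open and to aim only at the conditional reduction, and your outline matches the one fact the paper itself invokes right after stating the conjecture: by \cite[Proposition 5.6]{CFKSV}, if $S_p(E/K_{\cyc})^\vee$ is a finitely generated $\ZZ_p$-module, then $S_p(E/\K)^\vee$ lies in $\mathfrak M_H(G)$. Your two steps (Nakayama descent over $\Lambda(H)$, plus control of the kernel and cokernel of the restriction map $S_p(E/K_{\cyc}) \to S_p(E/\K)^H$ by $H^i\big(H,E(p)\big)$ and finitely many local terms, all cofinitely generated over $\ZZ_p$ since the relevant groups are $p$-adic Lie groups) are exactly the standard proof of that proposition. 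One small remark: strong admissibility is not what this reduction needs; in the paper it enters only later, for Zerbes' Akashi series formula \eqref{akashi}, whereas cofinite generation over $\ZZ_p$ of the local terms holds for any admissible $\K$.

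The one genuine inaccuracy is your closing implication ``Mazur's conjecture for $K_{\cyc}$ $\Rightarrow$ $\mathfrak M_H(G)$ conjecture for $\K$''. Mazur's conjecture asserts only that $S_p(E/K_{\cyc})^\vee$ is $\Lambda(\Gamma)$-torsion; the control argument needs the strictly stronger input that it is finitely generated over $\ZZ_p$, i.e.\ torsion \emph{with} $\mu=0$, and vanishing of $\mu$ is a separate open conjecture even for $E/\QQ$, where torsionness is Kato's theorem. You state the correct requirement earlier (``torsion with vanishing $\mu$-invariant'') but then write that Mazur's conjecture alone yields it; it does not, and it is precisely in the presence of positive $\mu$ over $K_{\cyc}$ that the $\mathfrak M_H(G)$ conjecture is not known to follow from cyclotomic information. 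So state the hypothesis of your conditional result as ``$S_p(E/K_{\cyc})^\vee$ finitely generated over $\ZZ_p$'' rather than ``Mazur's conjecture''; with that correction your sketch is the accepted route in the accessible cases.
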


If $S_p(E/K_{cyc})^\vee$ is a finitely generated $\ZZ_p$-module then it can be shown that $S_p(E/K_\infty)^\vee\in \mathfrak M_H(G)$  (\cite[Proposition 5.6]{CFKSV}). In particular, if  $S_p(E/K_{cyc})^\vee$ is a finitely generated $\ZZ_p$-module then  $Ak_{U\cap H}^U(S_p(E/K_\infty)^\vee)$ exists for every open normal subgroup $U$.

For an open normal subgroup $U$ of $G=\gal(\K/K)$,  $K_U:= K_\infty^U$ is the fixed field of $\K$ by $U$. We write $K_{U,\cyc}: = K_UK_\cyc$. 
Note that $H\cap U = \gal(\K/K_{U,\cyc})$. Put $\Gamma_U= \gal(K_{U,\cyc}/K_U) \cong \Z_p$. For a number field $L$ and prime $v$ of $L$, set
\[  J_v(E/L_{cyc}):=  \prod_{w|v} H^1(L_{cyc,w},E(p)) \]
where $w$ varies over the set of primes of $L_{cyc}$ lying over $v$.  The Pontryagin dual of $J_v(E/L_{cyc})$, denoted by $X_v(E/L_{cyc})$ is  a finitely generated $\ZZ_p$-module and hence a torsion $\Lambda(\gal(L_{cyc}/L))$-module. 

Let $E$ and $p$ be as before and take  $\K/K$ to be a strongly admissible extension. Assume  $\frac{S_p(E/K_\infty)^\vee}{S_p(E/K_\infty)^\vee(p)}$ is finitely generated over $\Lambda(H)$. As a consequence of  \cite[Theorem 1.3]{Z}, we have  for every open normal subgroup $U$ of $G$,

\begin{equation}\label{akashi}
Ak_{U\cap H}^U(S_p(E/K_\infty)^\vee) = char_{\Lambda(\Gamma_U)}(S_p(E/K_{U,cyc})^\vee) \prod_{v\in \Sigma_U} char_{\Lambda(\Gamma_U)}(X_v(E/K_{U,\cyc}))
\end{equation}

with $\Sigma_U = \{v \text{ prime in } K_U \mid  I_v(\K/K_U) \text{ is infinite}\}$ and $I_v(\K/K_U)$ denote the inertia subgroup of $\gal(\K/K_U)$ at $v$. 

Using Corollary \ref{eva1} and Lemma \ref{twist} in \eqref{akashi}, we deduce an application of Main Theorem.
\begin{corollary}\label{mainc}
Let $E$ and $p$ be as before and take  $\K/K$ to be a strongly admissible extension with $G =\gal(\K/K)$. Assume  $\frac{S_p(E/K_\infty)^\vee}{S_p(E/K_\infty)^\vee(p)}$ is finitely generated over $\Lambda(H)$. Then there is a countable subset $S_M$ of the set all continuous characters from  $\Gamma \lra \ZZ_p^\times$, such that if we choose and fix any character $\rho $ outside $ S_M$, then for every open normal subgroup $U$ of $G$,  $\chi \big(U,   S_p(E/  \K)^\vee(\rho)\big) $ exists and 
\begin{align}
& \chi \big(U,    S_p(E/  \K)^\vee(\rho)\big) \nonumber \\ 
& =  \tilde{\rho}^{-1} \big(Ak_{U\cap H}^U(S_p(E/K_\infty)^\vee)\big)   \nonumber \\
  & =  \tilde{\rho}^{-1}\big(char_{\Lambda(\Gamma_U)}(S_p(E/K_{U,cyc})^\vee)\big) \prod_{v\in \Sigma_U} \tilde{\rho}^{-1}\big(char_{\Lambda(\Gamma_U)}(X_v(E/K_{U,\cyc}))\big) \nonumber
\end{align}
with $\Sigma_U = \{v \text{ prime in } K_U \mid  I_v(\K/K_U) \text{ is infinite}\}$. \qed
\end{corollary}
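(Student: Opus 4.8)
The plan is to obtain the corollary by evaluating the Akashi series identity \eqref{akashi} through the homomorphism $\tilde\rho^{-1}$, with the left-hand side supplied by Corollary \ref{eva1} (which is the point at which the Main Theorem enters).

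First I would note that the hypothesis that $\frac{S_p(E/K_\infty)^\vee}{S_p(E/K_\infty)^\vee(p)}$ is finitely generated over $\Lambda(H)$, together with the finite generation of $S_p(E/\K)^\vee$ over $\Lambda(G)$ recalled earlier (see \cite{ov}), says exactly that $M := S_p(E/\K)^\vee \in \mathfrak M_H(G)$. Hence Corollary \ref{eva1} applies to $M$ and yields a countable subset $S_M$ of the set of continuous characters $\Gamma \to \ZZ_p^\times$ such that, for any fixed $\rho \notin S_M$ and every open normal subgroup $U$ of $G$: the Euler characteristic $\chi(U, S_p(E/\K)^\vee(\rho))$ exists, $\tilde\rho^{-1}(Ak_{U\cap H}^U(M))$ exists, and $\chi(U, S_p(E/\K)^\vee(\rho)) = \tilde\rho^{-1}(Ak_{U\cap H}^U(M))$. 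This is already the first displayed equality of the corollary; I fix this $S_M$ and such a $\rho$.

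For the second equality I would substitute \eqref{akashi} — available since $\K/K$ is strongly admissible and $M \in \mathfrak M_H(G)$ — which gives, in $Q(\Lambda(\Gamma_U))^\times/\Lambda(\Gamma_U)^\times$, that $Ak_{U\cap H}^U(M)$ equals the product of $char_{\Lambda(\Gamma_U)}(S_p(E/K_{U,cyc})^\vee)$ with $\prod_{v\in\Sigma_U}char_{\Lambda(\Gamma_U)}(X_v(E/K_{U,\cyc}))$, a product over the finite set $\Sigma_U$ ($\Sigma_U$ is finite because $\K/K$, hence $\K/K_U$, ramifies at only finitely many primes, so $I_v(\K/K_U)$ is trivial — a fortiori finite — for all but finitely many $v$) of non-zero elements of $\Lambda(\Gamma_U)$. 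Applying the ring homomorphism $\widetilde{(\rho|_{\Gamma_U})^{-1}}\colon \Lambda(\Gamma_U) \to \ZZ_p$ underlying $\tilde\rho^{-1}$ and using its multiplicativity on $\Lambda(\Gamma_U)$ distributes $\tilde\rho^{-1}$ over the product. Each resulting factor automatically lies in $\ZZ_p \setminus \{0\}$: it lies in $\ZZ_p$ because its argument lies in $\Lambda(\Gamma_U)$, and it is non-zero because the product of all of them equals $\tilde\rho^{-1}(Ak_{U\cap H}^U(M)) = \chi(U, S_p(E/\K)^\vee(\rho)) \neq 0$ while $\ZZ_p$ is an integral domain; thus each $\tilde\rho^{-1}\big(char_{\Lambda(\Gamma_U)}(\,\cdot\,)\big)$ exists in the sense of the definition preceding Lemma \ref{twist}. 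Here Lemma \ref{twist} (via the twisting identity \eqref{tw}) supplies the compatibility identifying $\tilde\rho^{-1}$ of a characteristic element with the value at $X=0$ of the characteristic element of the twisted module, which is what legitimizes this substitution. This yields the second displayed equality and completes the proof.

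The only delicate point I anticipate is notational rather than mathematical: $\tilde\rho^{-1}$ was set up for $\Lambda(\Gamma) \cong \ZZ_p[[X]]$ and $\rho\colon\Gamma\to\ZZ_p^\times$, whereas the Akashi series and its constituents live over $\Lambda(\Gamma_U)$ with $\Gamma_U \cong U/(U\cap H)$ embedded in $\Gamma$ as the open subgroup $UH/H$, so one must run the same construction with $\rho|_{\Gamma_U}$ and a uniformizer of $\Lambda(\Gamma_U)$. This is precisely the bookkeeping already carried out in Lemma \ref{eva} and Corollary \ref{eva1}, so no genuinely new obstacle arises; the corollary is a formal packaging of Corollary \ref{eva1}, Lemma \ref{twist}, and \eqref{akashi}.
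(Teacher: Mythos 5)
Your proposal is correct and follows essentially the same route as the paper, which deduces the corollary by combining Corollary \ref{eva1} (giving the existence of $\chi(U,S_p(E/\K)^\vee(\rho))$ and the first equality) with Lemma \ref{twist} and the Akashi series formula \eqref{akashi} for the second equality. The extra details you supply (finiteness of $\Sigma_U$, non-vanishing of each factor after evaluation, and the $\Gamma_U$ versus $\Gamma$ bookkeeping) are exactly the points the paper leaves implicit.
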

We refer to the discussions in \cite[\S 1.2]{Z} where the strong admissibility condition in Corollary \ref{mainc} can be replaced by a different set of hypotheses. 

\section*{Acknowledgement} We thank the referee  for his/her  comments, suggestions and in particular for simplifying the proof of Proposition \ref{dim}. The first named author gratefully acknowledges the support of DST INSPIRE faculty award grant and SERB ECR grant and the second named author gratefully acknowledges the support of DST INSPIRE faculty award grant.


\begin{thebibliography}{999999}
\bibitem[Br]{B} A. Brumer, Pseudocompact Algebras, Profinite Groups and Class Formations, {\it Journal of Algebra} {\bf 4}  (1966) 442-470.
\bibitem[B-H]{bh}P. N. Balister and S. Howson, Note on Nakayama's lemma for compact $\Lambda$-modules, {\it Asian J. Math.} {\bf 1} (1997) 224-229.
\bibitem[CFKSV]{CFKSV} J. H. Coates, T. Fukaya, K. Kato, R. Sujatha, O. Venjakob, $GL_2$ main conjecture of elliptic curves without complex multiplication, {\it Publ. Math. IHES} {\bf 101} (2005), 163-208.
\bibitem[CSS]{CSS} J. Coates, P.  Schneider, R. Sujatha, Link between $GL_2$ and cyclotomic Iwasawa theory, {\it Doc. Math}, {\bf Extra vol.} (2003) 187-215.
\bibitem[Pr]{pr} B. Perrin-Riou, Groupes de Selmer et accouplements: cas particulier des courbes elliptiques, {\it Doc. Math.} {\bf Extra Vol.} (2003), 725-760.
\bibitem[Gr1]{Gr} R. Greenberg, Iwasawa theory of elliptic curves, Lec. Notes in Math. 1716 (1997) 51-144.
\bibitem[Gr2]{gr1} R. Greenberg, Iwasawa theory for p-adic representations,  in Algebraic number theory,  Adv. Stud. Pure Math. {\bf 17} (1989) 97-137.
\bibitem[Ho]{sh} S. Howson, Euler Characteristics as invariants of Iwasawa modules, {\it Proc. London Math. Soc.}  {\bf 85}(3) (2002) 634-658.
\bibitem[JOZ]{joz} S. Jha, T. Ochiai and G. Z\'abr\'adi, On twists of modules over Non-Commutative Iwasawa algebras, {\it Algebra \& Number Theory}, {\bf 10}(3) 2016, 685-694.
\bibitem[La]{la} M. Lazard, Groupes analytiques $p$-adiques, {\it IHES Publ. Math.} {\bf 26} (1965), 5-219.
\bibitem[Wa]{wa} S. Wadsley, Euler Characteristics, Akashi series and compact p-adic Lie groups, {\it Proceedings of the AMS} {\bf 138}(10) (2010) 3455-3465.
\bibitem[OV1]{ov} Y. Ochi and O. Venjakob, On the ranks of Iwasawa modules over p-adic Lie extensions, {\it Math. Proc. Camb. Phil. Soc.}  {\bf 135} (2003) 25-43.
\bibitem[Ru]{R} K. Rubin, Euler systems,  Annals of Math. Studies {\bf 147}, Princeton Uni. Press, 2000.
\bibitem[S-U]{su} C. Skinner and E. Urban, The Iwasawa main conjectures for $GL_2$, Invent. Math. {\bf 195}(1) (2014) 1-277.
\bibitem[Za]{za} G. Z\'abr\'adi, Characteristic elements, pairings and functional equations over the false Tate curve extension, {\it Math. Proc. Camb. Phil. Soc.} {\bf 144}(3) (2008), 535-574.
\bibitem[Ze]{Z} S. Zerbes, Akashi series of Selmer groups, {\it Math. Proc. Camb. Phil. Soc.}  {\bf 151}(2) (2011) 229-243.
\end{thebibliography}
\end{document}